\renewcommand{\baselinestretch}{1.25}
 \newtheorem{theorem}{Theorem}[section]
 \newtheorem{conjecture}[theorem]{Conjecture}
 \newtheorem{lemma}[theorem]{Lemma}
 \newtheorem{proposition}[theorem]{Proposition}
 \theoremstyle{definition}
 \newtheorem{definition}[theorem]{Definition}
 \newtheorem{example}[theorem]{Example}
 \theoremstyle{remark}
 \newtheorem{remark}[theorem]{Remark}
\numberwithin{equation}{section}
  \newcommand{\subjclass}[1]{\footnotesize{{\em 2020 AMS Mathematics
 Subject Classification:} {\bf #1}}}
 \newcommand{\keywords}[1]{\renewcommand{\baselinestretch}{1}
 \footnotesize{\noindent {\em Key words and phrases:} {#1}}}
 \newcommand{\Nat}{\mathbb{N}}
 \newcommand{\set}[1]{\left\{#1\right\}}
 \newcommand{\Set}[2]{\set{#1\ \vert\ #2}}
\newcommand{\Mod}[3]{#1\equiv #2 \pmod{#3}}
\newcommand{\nMod}[3]{#1\not\equiv #2\ (\textrm{mod}\ #3)}
\newcommand{\Div}[2]{#1\,\mid\, #2}
\newcommand{\nDiv}[2]{#1\,\nmid\, #2}
\definecolor{darkmagenta}{rgb}{0.8,0,0.8}
\definecolor{darkgreen}{rgb}{0.2,0.7,0.02}
\definecolor{darkbrown}{rgb}{0.8,0.45,0}
\definecolor{bblue}{rgb}{0.25,0.25,0.75}
\date{}
\title{Prime Multiple Missing Graphs}
\author{Shamik Ghosh\thanks{An abridged version of this article appeared in Prime multiple missing graphs, Arai, K.(eds) Intelligent Computing. CompCom 2025. Lecture Notes in Networks and Systems, vol 1423. Springer, Cham. https://doi.org/10.1007/978-3-031-92602-0\_11}\\
{\footnotesize Department of Mathematics, Jadavpur University, Kolkata 700032, India. ghoshshamik@yahoo.com}
}
\begin{document}
\maketitle              

\begin{abstract}
The famous Goldbach conjecture remains open for nearly three centuries. Recently Goldbach graphs are introduced to relate the problem with the literature of Graph Theory. It is shown that the connectedness of the graphs is equivalent to the affirmative answer of the conjecture. Some modified version of the graphs, say, near Goldbach graphs are shown to be Hamiltonian for small number of vertices. In this context, we introduce a class of graphs, namely, prime multiple missing graphs such that near Goldbach graphs are finite intersections of these graphs. We study these graphs for primes $3,5$ and in general for any odd prime $p$. We prove that these graphs are connected with diameter at most $3$ and Hamiltonian for even $(>2)$ vertices.  Next the intersection of prime multiple missing graphs for primes $3$ and $5$ are studied. We prove that these graphs are connected with diameter at most $4$ and they are also Hamiltonian for even $(>2)$ vertices. We observe that the diameters of finite Goldbach graphs and near Goldbach graphs are bounded by $5$ (up to $10000$ vertices). We believe further study on these graphs with big data analysis will help to understand structures of near Goldbach graphs.\\[1em]
\subjclass{05C75, 11P32, 68R10}\\
\keywords{prime number; bipartite graph; odd-even graph; Goldbach conjecture; Goldbach graph.}
\end{abstract}



\section{Introduction}

The famous Goldbach conjecture states that ``any even positive integer greater than $5$ is a sum of two odd primes.'' The problem remains open since 1742. Recently the conjecture is connected with Graph Theory \cite{DGGS,HL}. The Goldbach graph is defined in 2021 and it is shown that the above conjecture is true if and only if all finite Goldbach graphs are connected \cite{DGGS}. The idea is generated from a graph, called odd-even graph. Let $\mathcal{E}$ be the set of all non-negative even integers and $\mathcal{O}$ be the set of all positive odd integers. Let $A \subseteq \mathcal{E}$ and $O \subseteq \mathcal{O}$. An {\em odd-even graph} $\mathcal{G}_A(O)$ is a simple undirected graph with vertex set $A$ and two vertices $a,b\in A$ are adjacent if and only if $\frac{a+b}{2}, \frac{|a-b|}{2} \in O$. The (infinite) Goldbach graph is an odd-even graph where $A=\mathcal{E}$ and $\mathcal{O}$ is the set of all odd primes. For the finite Goldbach graphs $G_n$, $A=\set{0,2,4,\ldots,2n}$ for some natural number $n$ and $O$ is the set of all odd primes less than $2n$. The odd-even graph with $A=\set{2,4,\ldots,2n}$ and $O$ as the set of odd primes (less than $2n$) along with $1$ are observed \cite{DGGS} to be Hamiltonian for small even $n>2$ and a Hamiltonian path is exhibited for $A=\set{2,4,\ldots,1000}$. We call these graphs as `{\em near Goldbach graphs}'. 
 
We note that prime numbers are not `non-trivial' (more than $1$) multiples of its predecessors. So if we delete multiples of primes one by one, we will be nearer to the study of near Goldbach graphs. These ideas and facts motivate us to define a class of graphs, namely, `prime multiple missing graphs' which is an odd-even graph where the even set is the set of even positive integers and the odd set contains all odd integers except `non-trivial' multiples of a fixed prime number. Naturally, finite intersections of these graphs (restricted to finite vertex sets) are near Goldbach graphs. Thus the study of connectedness and Hamiltonicity of these graphs will play crucial role which may lead to better understanding of Goldbach conjecture in future. Also a surprising fact, we observe that the diameters of finite Goldbach graphs and near Goldbach graphs are bounded by $5$ (up to $10000$ vertices). Thus finding the diameter also becomes important.
 
In this paper, we study prime multiple missing graphs for primes $3$ and $5$. We prove both of them are connected as well as Hamiltonian for even ($>2$) number of vertices. They have Hamiltonian paths for odd number of vertices. We obtain structure theorems, describe the pattern of adjacency matrices and cycle structures. We show that the prime multiple missing graph is of diameter at most $3$ for primes $3$ and $5$. Next we consider intersections of prime multiple missing graphs for primes $3$ and $5$. The structure of these graphs are more complicated. However, we could be able to prove that these graphs are connected with diameter at most $4$ and also have the Hamiltonian property as before. In conclusion, we briefly describe the general structure of the prime multiple missing graphs for any prime $p$. The study of intersections for larger number of prime multiple missing graphs needs big data analysis. We are not sure how far the Hamiltonian property would carry over but as per our observation we propose a conjecture at the end regarding diameters of Goldbach graphs and near Goldbach graphs. 
 
Throughout the paper let $\Nat$ denote the set of all natural numbers. For verification and other studies on Goldbach's conjecture one may consult \cite{chen,RICH,TAO,VIN}. For graph theoretic concepts, definitions and terminologies, see \cite{DBW}.

\section{Prime Multiple Missing Graphs}

Let $n\in\Nat$. For convenience, we call a multiple $nk$ $(k\in\Nat)$, a {\em non-trivial multiple} of $n$ if $k>1$. We recall that $\mathcal{E}$ is the set of all non-negative even integers and $\mathcal{O}$ is the set of all positive odd integers.

\begin{definition}\label{defgpn}
Let $p$ be a prime number. Let us define an odd-even graph $G(p,n)=(V,E)$ with the even set $A=\Set{x\in\mathcal{E}}{2\leq x\leq 2n}$ and the odd set \\
$O=\Set{y\in\mathcal{O}}{1\leq y\leq 2n-1 \text{ and } y\neq pk \text{ for any } k>1,\, k\in\Nat}$ for some $n\in\Nat$.\\
Thus the vertex set $V=\set{2,4,6,\ldots,2n}$ and two vertices $a,b\in V$ are adjacent if and only if  $\frac{a+b}{2}$ and $\frac{|a-b|}{2}$ both are odd positive integers but not a non-trivial multiple of $p$. Then $G(p,n)$ is called a {\em prime multiple missing graph}. 
\end{definition}

\begin{remark}\label{rem1}
It is important to note that, since the graphs $G(p,n)=(V,E)$ are odd-even graphs, they are bipartite with partite sets\\
 $X=\Set{x\in V}{\Mod{x}{0}{4}}$ and $Y=\Set{x\in V}{\Mod{x}{2}{4}}$.
\end{remark}

\begin{example}\label{ex1:318}
Let us consider the prime multiple missing graph $G(3,18)$. Here $A=\set{2,4,6,\ldots,36}$ and $O=\set{1,3,5,7,11,13,17,19,23,25,29,31,35}$. The adjacency between vertices, i.e., the edges of the graph $G(3,18)$ are better described in Figure \ref{gd318} (right). There is a path\\
$(32,26,20,14,8,2,4,10,16,22,28,34)$ in the graph and the set $\set{12,24,36}\cup \set{6,18,30}$ is an independent set. Here bold lines indicate that the vertex in the path is adjacent to all vertices within rounds, e.g., the vertex $8$ is adjacent to $6,18,30$ and the vertex $10$ is adjacent to $12,24,36$. 
\end{example}

\begin{figure}[h]
\begin{center}
\includegraphics[scale=0.5]{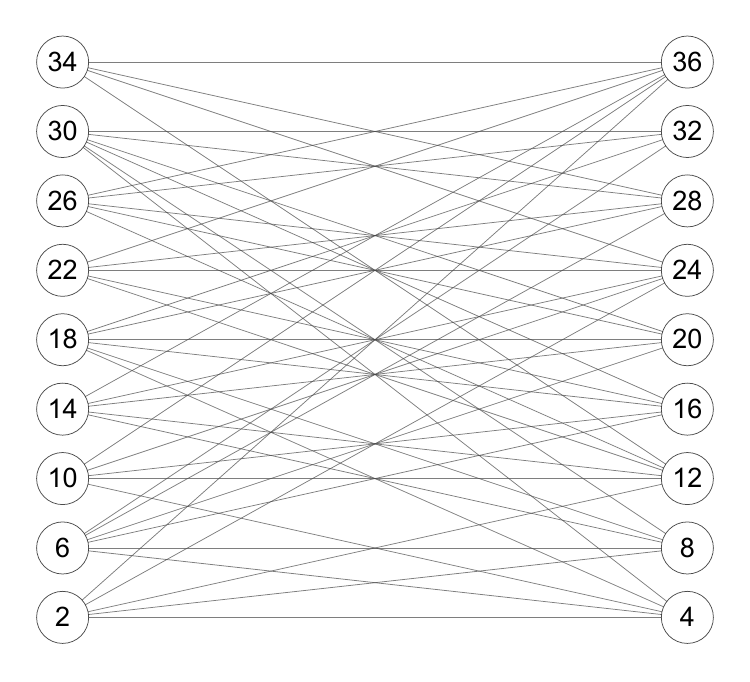}\quad \includegraphics[scale=0.45]{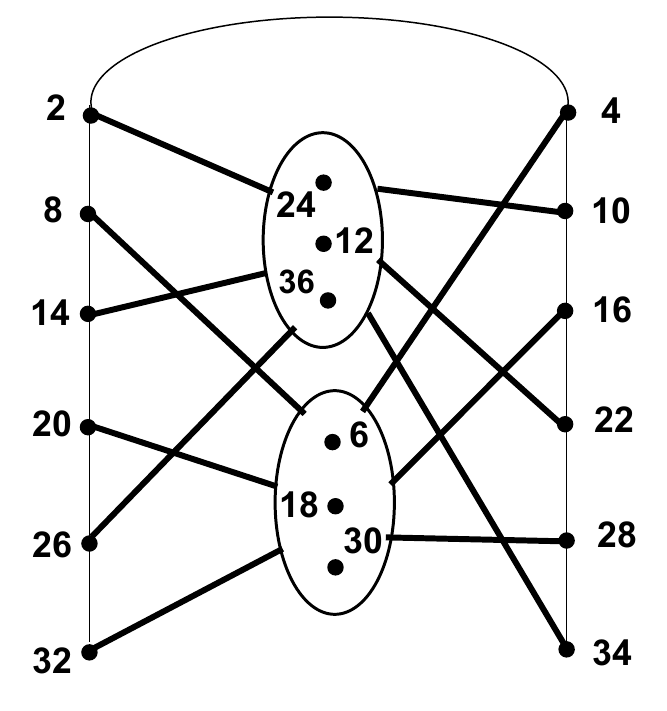}
\caption{The prime multiple missing graph $G(3,18)$ in Example \ref{ex1:318}}\label{gd318}
\end{center}
\end{figure}

\begin{proposition}\label{thmgind}
Let $p$ be an odd prime and $m,n\in\Nat$ such that $m<n$. Then the graph $G(p,m)$ is an induced subgraph of the graph $G(p,n)$.
\end{proposition}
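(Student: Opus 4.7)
The plan is to unwind the two conditions required for $G(p,m)$ to be an induced subgraph of $G(p,n)$: first, that $V(G(p,m))\subseteq V(G(p,n))$, and second, that for any pair of vertices $u,v\in V(G(p,m))$, $u$ and $v$ are adjacent in $G(p,m)$ if and only if they are adjacent in $G(p,n)$. The vertex-set inclusion is immediate from Definition \ref{defgpn}, since $m<n$ gives $\{2,4,\ldots,2m\}\subseteq\{2,4,\ldots,2n\}$.

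For the adjacency condition, I would compare the odd sets $O_m$ and $O_n$ used in the two graphs. The only difference between $O_m$ and $O_n$ is the upper bound: $O_m$ requires $y\le 2m-1$ while $O_n$ requires $y\le 2n-1$, with the ``not a non-trivial multiple of $p$'' clause and the parity clause being identical. In fact $O_m=O_n\cap[1,2m-1]$. So for distinct $u,v\in\{2,4,\ldots,2m\}$, I need to check that $\frac{u+v}{2}$ and $\frac{|u-v|}{2}$ already lie in the interval $[1,2m-1]$; once this is established, the conditions $\frac{u+v}{2},\frac{|u-v|}{2}\in O_m$ and $\frac{u+v}{2},\frac{|u-v|}{2}\in O_n$ become logically equivalent.

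To verify the bounds, I would just take the two largest distinct elements $2m$ and $2m-2$, giving the maximum possible half-sum $\frac{2m+(2m-2)}{2}=2m-1$ and the maximum possible half-difference $\frac{2m-2}{2}=m-1\le 2m-1$. Both values are trivially positive. So the two quantities always land in $[1,2m-1]$, and the adjacency conditions in $G(p,m)$ and $G(p,n)$ agree on $V(G(p,m))$.

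There is essentially no obstacle here; the proof amounts to the bookkeeping observation that shrinking the ambient vertex set does not change the arithmetic criterion, because restricting the vertices automatically restricts $\frac{u+v}{2}$ and $\frac{|u-v|}{2}$ to the range where $O_m$ and $O_n$ coincide.
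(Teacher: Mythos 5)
Your proof is correct and follows essentially the same route as the paper's: both arguments come down to the observation that for distinct $u,v\le 2m$ the quantities $\frac{u+v}{2}$ and $\frac{|u-v|}{2}$ are at most $2m-1$, so membership in $O_m$ and in $O_n$ coincide on the relevant range. Your phrasing via $O_m=O_n\cap[1,2m-1]$ is a slightly tidier packaging of the paper's two-directional check, but the substance is identical.
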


\begin{proof}
Let $A_m$ and $A_n$ be even sets and $O_m$ and $O_n$ be odd sets for graphs $G(p,m)$ and $G(p,n)$ respectively. Then by Definition \ref{defgpn}, we have $A_m\subset A_n$ and $O_m\subset O_n$. So the vertex set of $G(p,m)$ is a subset of the vertex set of $G(p,n)$. Now for any $a,b\in A_m$, if $a$ and $b$ are adjacent in $G(p,m)$, then $\frac{a+b}{2},\frac{|a-b|}{2}\in O_m\subset O_n$. Thus $a$ and $b$ are also adjacent in $G(p,n)$.

Conversely, for any $a,b\in A_m$, if $a$ and $b$ are adjacent in $G(p,n)$, then $\frac{a+b}{2},\frac{|a-b|}{2}\in O_n$. So both these numbers are not non-trivial multiples of $p$. Since $a,b\in A_m$, we have $a,b\leq 2m$ and $a\neq b$. Thus $\frac{a+b}{2},\frac{|a-b|}{2}\leq 2m-1$. This implies $\frac{a+b}{2},\frac{|a-b|}{2}\in O_m$. Hence $a$ and $b$ are adjacent in $G(p,m)$. Therefore, $G(p,m)$ is an induced subgraph of $G(p,n)$.
\end{proof}

Let $G=(V,E_1)$ and $H=(V,E_2)$ be two graphs with the same vertex set $V$. Then the intersection of graphs $G$ and $H$ is a graph $M=(V,E)$, where $E=E_1\cap E_2$. Now for a fixed $n\in\Nat$, we denote the intersection of graphs $G(p_1,n),G(p_2,n),\ldots,G(p_k,n)$ by $G(p_1,p_2,\ldots,p_k,n)$. 

\begin{definition}
Let $G(n)=(V,E)$ be an odd-even graph with the even set\\ $A=\Set{x\in\mathcal{E}}{2\leq x\leq 2n}$ and the odd set $O=\Set{p}{p\text{ is an odd prime},\ p<2n}\cup\set{1}$.\\
We call the graph $G(n)$, a (finite) {\em near Goldbach graph}.
\end{definition}

\begin{proposition}\label{propngb}
Let $n\in\Nat$, $n\geq 5$. The near Goldbach graph $G(n)$ is the intersection of graphs $G(3,n),G(5,n),\ldots,G(p,n)$ where $p$ is the highest prime such that $p^2<2n$. 
\end{proposition}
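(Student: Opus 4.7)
The plan is to reduce the statement to an elementary number-theoretic fact about odd integers up to $2n-1$. Both the near Goldbach graph $G(n)$ and the intersection $G(3,5,\ldots,p,n)$ share the vertex set $\{2,4,\ldots,2n\}$, and both describe the adjacency of $a,b$ by requiring that $(a+b)/2$ and $|a-b|/2$ lie in a prescribed ``odd set''. Writing $O_{G(n)}$ and $O_{G(q,n)}$ for those odd sets, it therefore suffices to establish
\[
O_{G(n)} \;=\; \bigcap_{q \in \{3,5,\ldots,p\}} O_{G(q,n)}.
\]

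For the forward inclusion, I would check the two kinds of elements of $O_{G(n)}$ separately. First, $1$ is not a non-trivial multiple of any integer exceeding $1$, so $1$ lies in every $O_{G(q,n)}$. Second, if $r$ is an odd prime less than $2n$, then any factorization $r = qk$ with $q$ prime forces $k = 1$, so $r$ is not a non-trivial multiple of any prime at all, and thus belongs to each $O_{G(q,n)}$. Hence $O_{G(n)} \subseteq \bigcap_{q} O_{G(q,n)}$.

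For the reverse inclusion, I would argue by contrapositive: take an odd integer $m$ with $1 \leq m \leq 2n-1$ that is neither $1$ nor prime, and exhibit a prime $q \in \{3,5,\ldots,p\}$ such that $m \notin O_{G(q,n)}$. The natural choice is to let $q$ be the smallest prime factor of $m$. Then $q$ is odd (because $m$ is), $q \leq \sqrt{m} \leq \sqrt{2n-1} < \sqrt{2n}$, and $m = q\cdot(m/q)$ with $m/q \geq q > 1$, so $m$ is a non-trivial multiple of $q$. Together with $q^2 < 2n$, the defining property of $p$ places $q$ among the listed primes $3, 5, \ldots, p$.

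The proof is not deep; the only piece of bookkeeping to watch is that the strict inequality $q^2 < 2n$ truly holds for the smallest prime factor of an odd composite bounded by $2n-1$, which follows from $q^2 \leq m \leq 2n-1 < 2n$, and that this $q$ is genuinely present in the intersection. Once both inclusions are assembled, the equality of odd sets transfers directly to the equality of edge sets, proving the proposition.
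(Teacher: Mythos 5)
Your proposal is correct and follows essentially the same route as the paper: both reduce the claim to showing that sieving the odd integers up to $2n-1$ by non-trivial multiples of the primes $3,5,\ldots,p$ leaves exactly the odd primes together with $1$, the key point being that an odd composite $m\leq 2n-1$ has a smallest prime factor $q$ with $q^2\leq m<2n$, hence $q\leq p$. Your write-up merely makes explicit this smallest-prime-factor bound, which the paper's proof leaves implicit.
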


\begin{proof}
In $G(n)$, we consider the odd set as the set of odd primes along with $1$. In each $G(p,n)$ we remove all non-trivial multiples of the prime $p$. Thus removing all non-trivial multiples of $3,5,\ldots,p$, where $p$ is the highest prime such that $p^2<2n$ from the odd set, it leaves only the odd primes less than $2n$ and the number $1$. Thus the odd set of $G(n)$ is same as the odd set of $G(3,5,\ldots,p,n)$. Also since $n$ is fixed, vertex sets of them are also same. Thus these two graphs are same.
\end{proof}

For example, $G(n)=G(3,n)$ for all $n\leq 12$, $G(n)=G(3,5,n)$ for all $n\leq 24$. In general, $G(n)=G(3,5,\ldots,p_i,n)$ for all $n\leq \frac{p_{i+1}^2-1}{2}$, where $p_{i+1}$ is the prime number next to $p_i$. 

\section{The structure of $G(3,n)$}\label{ssstr}

The first theorem describes the structure of graphs $G(3,n)$.

\begin{theorem}\label{thmg3n}
The graph $G(3,n)$ $(n\geq 6)$ is a bipartite graph and consists of an independent set and a path $P$ and vertices of $P$ are alternatively adjacent to all members of the independent set those belong to the opposite partite sets. Moreover, $G(3,1)\cong K_1$, $G(3,2)\cong K_2$, $G(3,3)\cong P_3$, $G(3,4)\cong C_4$ and $G(3,5)$ consists of a $4$-cycle and a pendant vertex to one vertex of the cycle.
\end{theorem}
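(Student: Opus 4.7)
The plan is to give an explicit description of the independent set and the path, then verify each of the required structural properties using elementary congruence arguments modulo $3$ and modulo $4$. For the small cases $n\in\{1,2,3,4,5\}$, the vertex set has at most five elements, so the edges can be enumerated directly and the claimed isomorphisms to $K_1$, $K_2$, $P_3$, $C_4$, and the $4$-cycle-with-pendant verified by inspection. Bipartiteness of $G(3,n)$ is immediate from Remark~\ref{rem1}.

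For $n\geq 6$, I would set $I=\{v\in V : 6\mid v\}$ and $P=V\setminus I$, and further split $P=A_2\cup A_4$ according to whether $v\equiv 2\pmod{6}$ or $v\equiv 4\pmod{6}$. Three claims then establish the theorem. First, $I$ is independent: for distinct $a,b\in I$, both $\frac{a+b}{2}$ and $\frac{|a-b|}{2}$ are multiples of $3$, and since $a,b\geq 6$ are distinct we have $\frac{a+b}{2}\geq 9$, so whenever this quotient is odd it is a non-trivial multiple of $3$ and hence not in $O$. Second, every vertex of $P$ is adjacent to every opposite-partite vertex of $I$: if $a\in P$, $b\in I$, and $a\not\equiv b\pmod{4}$, then $\frac{a+b}{2}$ and $\frac{|a-b|}{2}$ are both odd, and since $3\nmid a$ while $3\mid b$ we have $3\nmid(a\pm b)$, so both quotients lie in $O$. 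Same-partite pairs are non-adjacent by Remark~\ref{rem1}, which gives exactly the ``alternate'' adjacency pattern stated.

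Third, $P$ induces a path. For distinct $a,b$ both in $A_2$ (or both in $A_4$), $|a-b|$ is a positive multiple of $6$, so $\frac{|a-b|}{2}$ is a multiple of $3$; to lie in $O$ it must equal $3$, forcing $|a-b|=6$. Consecutive terms of each arithmetic progression $A_2=\{2,8,14,\ldots\}$ and $A_4=\{4,10,16,\ldots\}$ differ by $6\equiv 2\pmod{4}$, so they automatically lie in opposite partite sets, and $\frac{a+b}{2}\not\equiv 0\pmod{3}$ because $a+b\not\equiv 0\pmod{3}$; hence each of $A_2$ and $A_4$ induces a path on its elements in natural order. For $a\in A_2$ and $b\in A_4$, we have $a+b\equiv 0\pmod{6}$, so $\frac{a+b}{2}\in O$ forces $a+b=6$, giving the unique cross edge $(2,4)$. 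Reversing the $A_2$-path and appending the $A_4$-path through this edge yields the desired path $P$ on $V\setminus I$.

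The main obstacle is the third step: one must simultaneously verify that the only internal edges in each residue class are those between consecutive elements, that $(2,4)$ is the only link between $A_2$ and $A_4$, and that the parity-modulo-$4$ alternation within each arithmetic progression makes the two sub-paths glue consistently into a single path. All other parts reduce to elementary divisibility and parity checks, and the small-$n$ cases are settled by direct enumeration.
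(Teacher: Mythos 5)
Your proposal is correct and follows essentially the same route as the paper: the same decomposition into the independent set of multiples of $6$ and the two arithmetic progressions $\{2,8,14,\ldots\}$ and $\{4,10,16,\ldots\}$ glued at the edge $\{2,4\}$, verified by the same congruence computations modulo $3$, $4$, and $6$ (the paper merely parametrizes the elements as $6x+2$, $6y+4$, $6y+6$ and splits the independent set by partite class, while you phrase the same checks as divisibility conditions). The small cases are handled by direct inspection in both versions, so no substantive difference remains.
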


\begin{proof}
Let $n\in\Nat$ and $G=G(3,n)=(V,E)$. Then $V=\Set{2x}{x\in\Nat,\ x\leq n}$ and two vertices $a,b\in V$ are adjacent if and only if  $\frac{a+b}{2}$ and $\frac{|a-b|}{2}$ both are odd positive integers but not a non-trivial multiple of $3$.

Let $V_1=\Set{x\in V}{\Div{x}{12}}=\set{12,24,36,\ldots}$, $V_2=\Set{x\in V}{\Div{x}{6},\ \nDiv{x}{12}}$\\
$=\set{6,18,30,\ldots}$ and 
$V_3=V\setminus (V_1\cup V_2)=\set{2,4,8,10,14,16,20,22,26,\ldots}$.

First note that $V_1=\emptyset$ for $n<6$ and $V_2=\emptyset$ for $n<3$. So leaving apart some initial examples, for the sequel, we assume $n\geq 6$. By Remark \ref{rem1}, $G$ is bipartite with partite sets $X$ and $Y$. Thus both $V_1$ and $V_2$ are independent sets as each of them is a subset of a partite set ($V_1\subset X$ and $V_2\subset Y$). Also for any $a\in V_1$ and $b\in V_2$, $\frac{a+b}{2}$ is a non-trivial multiple of $3$. Thus $V_1\cup V_2$ is an independent set.

Now consider the set $V_{31}=\set{2,8,14,20,\ldots}=\Set{6x+2\in V_3}{x\in\Nat\cup\set{0}}$. We note that any two consecutive members of the set are adjacent in $G$ as $\frac{(6x+2)+(6(x+1)+2)}{2}=6x+5$ and $\frac{(6(x+1)+2)-(6x+2)}{2}=3$. Both are not a non-trivial multiple of $3$. Now if we consider any two (distinct) elements $6x+2$ and $6y+2$ in this set, then $\frac{|(6x+2)-(6y+2)|}{2}=3|x-y|$, which is a non-trivial multiple of $3$ unless $|x-y|=1$. Thus this set of elements induce a path in $G$. Similarly, one can show that the set $V_{32}=\set{4,10,16,22,28,\ldots}=\Set{6x+4\in V_3}{x\in\Nat\cup\set{0}}$ also induces a path in $G$. These two paths are joined by the edge between $2$ and $4$ (as they are adjacent) but no other pair of vertices in these two paths are adjacent. Indeed, if we consider two numbers $6x+2$ and $6y+4$, then $\frac{(6x+2)+(6y+4)}{2}=3(x+y)+3$ which is a non-trivial multiple of $3$ unless $x=y=0$. Therefore $V_3$ induces a path, say $P$, in $G$.

Finally, we show that every element of $V_3\cap X$ is adjacent to all members of $V_2$ and every element of $V_3\cap Y$ is adjacent to all members of $V_1$, i.e., vertices in the path $P$ are alternatively adjacent to all members of $V_1$ and $V_2$. If we consider $6x+2$ and $6y+6$, then $\frac{(6x+2)+(6y+6)}{2}=3(x+y+1)+1$ and $\frac{|(6x+2)-(6y+6)|}{2}=|3(x-y)-2|$. If $x$ and $y$ are of opposite parity, then both numbers odd, but not a multiple of $3$. Thus these numbers are adjacent in $G$. So every element of $V_{31}\cap X$ is adjacent to all of $V_2$ and every element of $V_{31}\cap Y$ is adjacent to all of $V_1$. Similarly, we can show that $V_{32}\cap X$ is adjacent to all of $V_2$ and every element of $V_{32}\cap Y$ is adjacent to all of $V_1$.

The small graphs (for $n<6$) are induced subgraphs of $G(3,6)$ by Theorem \ref{thmgind}. Thus they can easily be obtained from Figure \ref{gd318} (right).
\end{proof}

Now let us see the pattern of the adjacency matrix of the graph $G=G(3,n)$. We first note that the graph $G(3,n)$ is a bipartite graph, say, $G=(X,Y,E)$. Since there are no edges between two vertices of the same partite set ($X$ or $Y$), the adjacency matrix $A(G)$ takes the following form:
{\footnotesize $$A(G)\ =\ \begin{array}{c|c|c|}
\multicolumn{1}{c}{} & X & \multicolumn{1}{c}{Y}\\ 
\cline{2-3}
X & \mathbf{0} & B \\ 
\cline{2-3}
Y & B^T & \mathbf{0}\\
\cline{2-3}
\end{array}$$}
where $B$ is the {\em biadjacency matrix} of $G$ and $B^T$ is the transpose of $B$. Thus in order to understand the adjacency matrix structure of a bipartite graph $G$, it is sufficient to study the pattern of the biadjacency matrix of $G$.

\begin{definition}
Let $P=(X,Y,E)$ be a graph which is a path. If we arrange vertices along the path alternatively in rows and columns, then the biadjacency matrix of $P$ is called a {\em path matrix} (as the right bottom submatrix in Table \ref{tabg318}).
\end{definition}

\begin{table}[t]
{\footnotesize $$\begin{array}{c|ccc|cccccc|}
\multicolumn{1}{c}{} & 12 & 24 & \multicolumn{1}{c}{36} & 28 & 16 & \multicolumn{1}{c}{4} & 8 & 20 & \multicolumn{1}{c}{32}\\ 
\cline{2-10}
 6 & 0 & 0 & 0 & 1 & 1 & 1 & 1 & 1 & 1 \\
 18 & 0 & 0 & 0 & 1 & 1 & 1 & 1 & 1 & 1 \\
 30 & 0 & 0 & 0 & 1 & 1 & 1 & 1 & 1 & 1 \\
\cline{2-10}
 34 & 1 & 1 & 1 & 1 & 0 & 0 & 0 & 0 & 0 \\
 22 & 1 & 1 & 1 & 1 & 1 & 0 & 0 & 0 & 0 \\
 10 & 1 & 1 & 1 & 0 & 1 & 1 & 0 & 0 & 0 \\
 2 & 1 & 1 & 1 & 0 & 0 & 1 & 1 & 0 & 0 \\
 14 & 1 & 1 & 1 & 0 & 0 & 0 & 1 & 1 & 0 \\
 26 & 1 & 1 & 1 & 0 & 0 & 0 & 0 & 1 & 1 \\
\cline{2-10}
\end{array}$$}
\caption{The biadjacency matrix of $G(3,18)$ (see Example \ref{ex1:318})}\label{tabg318}
\end{table}

\begin{theorem}
Let $G=G(3,n)$ for some $n\geq 6$. Then the vertices of $G$ can be arranged in such a way that $B(G)$, the biadjacency matrix of $G$ takes the following form, where $A_1$ is a path matrix.
$$\begin{array}{|c|c|}
\hline
\mathbf{0} & \mathbf{1} \\
\hline
\mathbf{1} & A_1 \\
\hline
\end{array}$$
\end{theorem}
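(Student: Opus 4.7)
The plan is to read off the block decomposition directly from the structural description given in Theorem~\ref{thmg3n}. Since $G=G(3,n)$ is bipartite with partite sets $X=\Set{x\in V}{\Mod{x}{0}{4}}$ and $Y=\Set{x\in V}{\Mod{x}{2}{4}}$, I will take the rows of $B(G)$ to be indexed by $Y$ and the columns by $X$. Theorem~\ref{thmg3n} supplies a decomposition $V=V_1\cup V_2\cup V_3$ with $V_1\subset X$, $V_2\subset Y$, $V_1\cup V_2$ independent in $G$, and $V_3$ inducing a path $P$; moreover every vertex of $V_3\cap X$ is adjacent to every vertex of $V_2$, and every vertex of $V_3\cap Y$ is adjacent to every vertex of $V_1$. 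These are exactly the four pieces of information I need.

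With this in mind, I will order the rows of $B(G)$ by listing the elements of $V_2$ first (in any order), then the elements of $V_3\cap Y$ in the order they appear along the path $P$, starting from one endpoint. The columns are ordered analogously: list the elements of $V_1$ first, then the elements of $V_3\cap X$ in the order they appear along $P$. Because $P$ lies inside the bipartite graph $G$, consecutive vertices of $P$ automatically alternate between $X$ and $Y$, so the two path orderings used for the $V_3$-blocks are compatible with the alternating arrangement required in the definition of a path matrix.

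Block by block: the top-left block (rows $V_2$, columns $V_1$) is $\mathbf{0}$ because $V_1\cup V_2$ is independent in $G$ and therefore has no edges between $V_1$ and $V_2$. The top-right block (rows $V_2$, columns $V_3\cap X$) is the all-ones matrix $\mathbf{1}$, by the ``alternate full adjacency'' clause in Theorem~\ref{thmg3n}, and symmetrically the bottom-left block (rows $V_3\cap Y$, columns $V_1$) is the all-ones matrix $\mathbf{1}$. Finally, the bottom-right block records the edges of the induced path $P$ between the classes $V_3\cap Y$ and $V_3\cap X$, with both index sets listed in the order they appear along $P$; this is the biadjacency matrix of $P$ arranged alternately along the path, i.e.\ a path matrix $A_1$.

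There is no serious obstacle here: the proof is essentially a relabelling of $B(G)$ driven by Theorem~\ref{thmg3n}, and the only point that deserves care is making sure the chosen orderings of $V_3\cap X$ and $V_3\cap Y$ actually yield the standard path matrix rather than a shuffled variant. As noted above, this is handled by reading off both orderings from a single traversal of $P$ from a fixed endpoint, which guarantees that the resulting bottom-right block matches the biadjacency pattern in the definition preceding the theorem (as already illustrated by Table~\ref{tabg318} in Example~\ref{ex1:318}).
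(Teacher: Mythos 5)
Your proposal is correct and follows essentially the same route as the paper: both read the block structure directly off Theorem~\ref{thmg3n}, putting the independent set $V_1\cup V_2$ first (giving the $\mathbf{0}$ block), using the alternate full adjacency for the two $\mathbf{1}$ blocks, and ordering $V_3\cap X$ and $V_3\cap Y$ along the path $P$ to obtain the path matrix $A_1$. The paper merely makes your ``single traversal of $P$'' explicit via residues modulo $3$ (listing $X_1$, $Y_1$ in decreasing and $X_2$, $Y_2$ in increasing order), which is the same ordering.
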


\begin{proof}
The graph $G=G(3,n)=(V,E)$ is a bipartite graph with partite sets $X$ and $Y$ as in Remark \ref{rem1}. Now we further split $X$ and $Y$ as follows. Let $X_i=\Set{x\in X}{\Mod{x}{i}{3}}$ and $Y_i=\Set{y\in Y}{\Mod{y}{i}{3}}$. Then we show that the biadjacency matrix $B(G)$ of $G$ takes the prescribed form if we arrange vertices of $X_0$, vertices of $X_1$ in decreasing order and then vertices of $X_2$ in increasing order in rows and similarly arrange vertices of $Y$ in columns. Then by Theorem \ref{thmg3n}, the submatrix $X_1\cup X_2$ by $Y_1\cup Y_2$ is the biadjacency matrix of a path. Also the submatrix $X_0$ by $Y_0$ is a null matrix as $X_0\cup Y_0$ is an independent set. Finally, we have all vertices of $X_0$ ($Y_0$) are adjacent to all vertices of the path which belong to $Y$ ($X$ resp.). So all entries of submatrices $X_0$ by $Y_1\cup Y_2$ and $Y_0$ by $X_1\cup X_2$ are $1$.
\end{proof}

\section{Properties of $G(3,n)$}\label{secg3np}

The structure theorem (Theorem \ref{thmg3n}) suggests that graphs $G(3,n)$ are connected for any $n\in\Nat$. Actually we get something more.

\begin{theorem}\label{thmg3nd}
The graphs $G(3,n)$ are connected for all $n\in\Nat$ with diameter at most $3$.
\end{theorem}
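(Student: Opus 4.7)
The approach is to leverage the explicit decomposition provided by Theorem \ref{thmg3n}. For $n\leq 5$ the claim is immediate from the descriptions listed there ($K_1$, $K_2$, $P_3$, $C_4$, and $C_4$ with a pendant, each of diameter at most $3$), so I may assume $n\geq 6$. Recall from that theorem that $V=V_1\cup V_2\cup V_3$ where $V_1\subseteq X$, $V_2\subseteq Y$, $V_1\cup V_2$ is an independent set, $V_3$ induces a path $P$, every vertex of $V_3\cap X$ is adjacent to every vertex of $V_2$, and every vertex of $V_3\cap Y$ is adjacent to every vertex of $V_1$. The condition $n\geq 6$ ensures $V_1$ and $V_2$ are both non-empty.

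The crux is that $V_1$ and $V_2$ act as ``hubs''. First I would record that any two vertices of $V_3\cap X$ share every vertex of $V_2$ as a common neighbor (distance at most $2$), and symmetrically any two vertices of $V_3\cap Y$ share every vertex of $V_1$. The same idea shows $\mathrm{dist}(u,v)\leq 2$ whenever $u,v\in V_1$ (common neighbor in $V_3\cap Y$, e.g.\ $2$) or $u,v\in V_2$ (common neighbor in $V_3\cap X$, e.g.\ $4$). It also disposes of pairs $u\in V_1\cup V_2$, $v\in V_3$ lying in the same partite set: use any path-neighbor of $v$ in $V_3$ (which is automatically adjacent to $u$ by the hub property). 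Pairs lying in opposite partite sets with one endpoint in $V_1\cup V_2$ and the other in $V_3$ are in fact adjacent, so their distance is $1$.

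The remaining mixed cases require walks of length exactly $3$. For $u\in V_3\cap X$ and $v\in V_3\cap Y$ with $uv\notin E$, pick any $w\in V_2$ (adjacent to $u$) and any path-neighbor $z\in V_3\cap X$ of $v$; then $u$--$w$--$z$--$v$ is the required walk. Such a $z$ exists because $P$ is bipartite, so every vertex of $V_3\cap Y$ has at least one path-neighbor, which must lie in $V_3\cap X$. Finally, if $u\in V_1$ and $v\in V_2$, their neighbourhoods $N(u)\subseteq V_3\cap Y$ and $N(v)\subseteq V_3\cap X$ are disjoint, so they have no common neighbour; but $u$--$2$--$4$--$v$ is a walk of length $3$, since $2\in V_3\cap Y$ and $4\in V_3\cap X$ are adjacent along $P$.

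I expect the principal technical nuisance to be the asymmetry at the ends of $P$: the two extreme vertices of $P$ have only one neighbor in $V_3$, so each length-$3$ construction must rely on the existence of \emph{some} path-neighbor rather than a specific one. Once that point is handled, the case analysis closes and yields both connectedness and $\mathrm{diam}(G(3,n))\leq 3$ for every $n\in\Nat$.
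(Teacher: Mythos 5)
Your proof is correct and follows essentially the same route as the paper: both rely on the decomposition $V=V_1\cup V_2\cup V_3$ from Theorem \ref{thmg3n} and the fact that vertices of the path $P$ are alternately adjacent to all of $V_1$ or all of $V_2$, then finish by a case analysis on which parts the two vertices lie in. Your write-up is somewhat more explicit (e.g.\ exhibiting $2$ and $4$ as concrete hubs and treating the $V_1$--$V_2$ pair separately), but the underlying argument is the same as the paper's.
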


\begin{proof}
We first note that, by Theorem \ref{thmg3n}, diameter of $G(3,n)$ is at most $3$ for $n<6$. Let $n\geq 6$. Then $V_1,V_2,V_3$  all are nonempty as defined in the proof of Theorem \ref{thmg3n} and let $P$ be the path induced by $V_3$. Let $x\in V_1$. Then for any element on $P$ or its neighbor is adjacent to $x$. Since the same is true for any $y\in V_2$, every vertex is at most $3$ distance apart from $x$. The result is same for any vertex in $V_2$. Now take any two non-adjacent vertices on the path $P$. If both of them are adjacent to all vertices of $V_1$, then they are $2$ distant apart. If one, say $x$, is adjacent to all vertices of $V_1$ and the other is adjacent to all vertices of $V_2$, then any neighbor of $x$ on $P$ is adjacent to all vertices of $V_2$. So they are $3$ distance apart. Therefore the graphs $G(3,n)$ are connected and diameters of the graphs are at most $3$.
\end{proof}

Now we observe interesting cycle structure of graphs $G(3,n)$. We know that the {\em girth} of a graph is the length of its shortest cycle.

\begin{proposition}\label{lemcy1}
The girth of graphs $G(3,n)$ is $4$ for all $n\geq 4$.
\end{proposition}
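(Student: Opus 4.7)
The plan is to establish the lower bound girth $\geq 4$ from bipartiteness, and then exhibit an explicit $4$-cycle that lives inside every $G(3,n)$ with $n\geq 4$.

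For the lower bound, I would appeal directly to Remark~\ref{rem1}: $G(3,n)$ is bipartite with partite sets $X$ and $Y$ (classes modulo $4$), so it contains no odd cycles and in particular no triangle. Thus any cycle present must have length at least $4$.

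For the upper bound, the cleanest route is to note that $G(3,4)\cong C_4$ (already recorded in the statement of Theorem~\ref{thmg3n}). Concretely, one checks the four edges $2$--$4$, $4$--$6$, $6$--$8$, $8$--$2$ by verifying that each pair of half-sum and half-difference is an odd positive integer which is not a non-trivial multiple of $3$ (the relevant values being $3,1$; $5,1$; $7,1$; $5,3$ respectively). Then Proposition~\ref{thmgind} implies that $G(3,4)$ is an induced subgraph of $G(3,n)$ for every $n\geq 4$, so the cycle $2$--$4$--$6$--$8$--$2$ persists in $G(3,n)$. Combined with the bipartite lower bound, this forces the girth to be exactly $4$.

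There is no real obstacle here; the only thing to be a bit careful about is invoking Proposition~\ref{thmgind} correctly (which needs $p=3$ odd, and $m<n$), and then separately handling $n=4$ itself where the cycle is the whole graph. Both points are immediate from the material already proved.
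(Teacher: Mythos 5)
Your proposal is correct and matches the paper's own argument: the paper likewise gets the lower bound from simplicity and bipartiteness and then exhibits the $4$-cycle $(2,4,6,8,2)$ in $G(3,n)$ for all $n\geq 4$. Your extra care in checking the four half-sum/half-difference pairs and in citing Proposition~\ref{thmgind} to propagate the cycle to larger $n$ only makes explicit what the paper leaves implicit.
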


\begin{proof}
The graphs $G(3,n)$ are simple and bipartite. So there are no $1,2$ or $3$-cycles in these graphs. We note that $(2,4,6,8,2)$ is a $4$-cycle in $G(3,n)$ for all $n\geq 4$. Thus the girth is $4$.
\end{proof}

Now apart from the $4$-cycle formed by first $4$ vertices, there are many $4$-cycles in graphs $G(3,n)$.

\begin{lemma}\label{lemcy2}
Every vertex of $G(3,n)$ is a vertex of a $4$-cycle for all $n\geq 6$.
\end{lemma}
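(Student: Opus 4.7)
The plan is to exploit the structure of $G(3,n)$ established in Theorem \ref{thmg3n}: the graph decomposes as the disjoint union of the independent set $V_1\cup V_2$ and the induced path $P$ on $V_3$, with every $v\in V_1$ adjacent to all of $V_3\cap Y$ and every $v\in V_2$ adjacent to all of $V_3\cap X$. I will exhibit, for every vertex $v$, an explicit 4-cycle through $v$, splitting into the cases $v\in V_1$, $v\in V_2$, and $v\in V_3$.

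For $n\geq 6$ the four vertices $2,4,8,10$ always lie in $V_3$, so $P$ contains the subpath $(8,2,4,10)$. This alone gives 4-cycles through every vertex of the independent part: the cycle $(v,2,4,10,v)$ passes through any $v\in V_1$, and the cycle $(v,4,2,8,v)$ passes through any $v\in V_2$, since in each case the two ``diagonal'' edges are guaranteed by the adjacency rule of Theorem \ref{thmg3n}.

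For $v\in V_3$ the plan is to build a 4-cycle using a suitable ``hub'' from $V_1$ or $V_2$. Since $|V_3|\geq 4$ for $n\geq 6$, the path $P$ has length at least $3$, and a quick case check shows that $v$ has at least one $P$-neighbour $y$ of $P$-degree $2$: if $v$ is interior to $P$, its two $P$-neighbours cannot both be endpoints (otherwise $|P|=3$), while if $v$ is itself a $P$-endpoint then its unique $P$-neighbour is the second vertex of $P$, which is interior. Let $v'$ be the other $P$-neighbour of $y$; then $v'$ lies in the same partite class as $v$. If $v\in V_3\cap X$, choose any $w\in V_2$ (nonempty for $n\geq 6$); by Theorem \ref{thmg3n}, $w$ is adjacent to both $v$ and $v'$, so $(v,w,v',y,v)$ is a 4-cycle. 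The case $v\in V_3\cap Y$ is handled symmetrically with $w\in V_1$.

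The main obstacle is really only bookkeeping: one must check in each case that the four named vertices are distinct and that every claimed edge is present. Distinctness follows from the pairwise disjointness of $V_1,V_2,V_3$ and the alternation of partite classes along $P$, while each edge is supplied either by the fully-bipartite adjacencies of Theorem \ref{thmg3n} or by consecutive membership on $P$. No further casework is needed.
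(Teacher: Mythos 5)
Your proof is correct and takes essentially the same route as the paper: both arguments build each $4$-cycle from two consecutive edges of the path $P$ together with a ``hub'' in $V_1$ or $V_2$ supplied by the structure theorem, splitting into the case of a path vertex and the case of a vertex of the independent set. Your version is slightly more explicit (naming the concrete subpath $(8,2,4,10)$ for the $V_1\cup V_2$ case), but the underlying idea is identical.
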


\begin{proof}
We first note that, since $n\geq 6$, we have both $V_1$ and $V_2$ are nonempty and the path, say $P$, induced by $V_3$ is of length at least $4$. Let $x,y,z$ be any three consecutive vertices on $P$, then either both $x$ and $z$ are adjacent to all vertices of $V_1$ or they are adjacent to all vertices of $V_2$ (see Theorem \ref{thmg3n}). Thus $(u,x,y,z,u)$ is a $4$-cycle where either $u\in V_1$ or $u\in V_2$. Since $n\geq 6$, we get both cases for first $3$ or last $3$ vertices on any length $4$ subpath of $P$. Thus every vertex is a vertex of a $4$-cycle.
\end{proof}

Now Lemma \ref{lemcy2} gives rise to a question that whether $G(3,n)$ is chordal bipartite\footnote{A bipartite graph is called {\em chordal bipartite} if it does not contain an induced cycle of length greater than $4$.} or not. The following result shows the negative answer for $n\geq 8$.

\begin{lemma}\label{lemcy3}
The graph $G(3,n)$ contains induced $6$-cycles for any $n\geq 8$. 
\end{lemma}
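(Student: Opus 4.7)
My plan is to reduce the statement to the single instance $n=8$ via Proposition \ref{thmgind}: since $G(3,8)$ is an induced subgraph of $G(3,n)$ for every $n\geq 8$, any induced $6$-cycle exhibited in $G(3,8)$ remains induced in $G(3,n)$. Invoking Theorem \ref{thmg3n} to describe $G(3,8)$, we have the independent set $V_1\cup V_2=\{12\}\cup\{6\}$ together with the path $P:\ 14-8-2-4-10-16$ induced by $V_3=\{2,4,8,10,14,16\}$, with every vertex of $V_3\cap X=\{4,8,16\}$ adjacent to $6$ and every vertex of $V_3\cap Y=\{2,10,14\}$ adjacent to $12$.

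I would then propose the explicit candidate
$$C:\ 12-2-8-6-16-10-12.$$
Its six cycle edges are immediate from this structural description: $12\sim 2$ and $10\sim 12$ link $V_1$ to $V_3\cap Y$; $8\sim 6$ and $6\sim 16$ link $V_3\cap X$ to $V_2$; and $(2,8)$, $(10,16)$ are pairs of consecutive vertices on $P$.

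The main step is to verify that $C$ has no chord. Of the nine non-cycle pairs among the six vertices of $C$, six lie within a single partite set (note $\{12,8,16\}\subseteq X$ and $\{2,6,10\}\subseteq Y$), so they are non-edges by bipartiteness; the pair $\{12,6\}$ is a non-edge because $V_1\cup V_2$ is independent by Theorem \ref{thmg3n}; and the two remaining cross-partite pairs $\{2,16\}$ and $\{8,10\}$ lie in $V_3$ but are non-consecutive on $P$, hence non-edges because $V_3$ induces exactly $P$. I do not expect a genuine obstacle here: the whole point of choosing two well-separated consecutive pairs on $P$ (one from each partite side) and hooking them through a single vertex of $V_1$ and a single vertex of $V_2$ is precisely that the bipartition and the path structure together rule out every potential chord, so once the cycle $C$ is written down the verification becomes automatic.
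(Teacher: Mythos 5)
Your proposal is correct and follows essentially the same route as the paper: you exhibit the very same $6$-cycle $(2,8,6,16,10,12,2)$ in $G(3,8)$ and lift it to all $n\geq 8$ via Proposition \ref{thmgind}. Your chord verification is merely more systematic than the paper's (which checks only the three cross-partite non-edges $2\not\leftrightarrow 16$, $8\not\leftrightarrow 10$, $6\not\leftrightarrow 12$ and leaves bipartiteness implicit for the rest), and it is accurate.
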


\begin{proof}
We note that $(2,8,6,16,10,12,2)$ is an induced $6$-cycle in $G(3,8)$ as $2\not\leftrightarrow 16$, $8\not\leftrightarrow 10$ and $6\not\leftrightarrow 12$. By Proposition \ref{thmgind}, this is also an induced cycle of graphs $G(3,n)$ for all $n\geq 8$.
\end{proof}

This $6$-cycle is not a single occurrence, one can see that many $6$-cycles can be formed by suitably choosing two disjoint edges of the path $P$ and two vertices, one each from $V_1$ and $V_2$. But this length cannot be made higher.

\begin{proposition}\label{lemcy5}
The graph $G(3,n)$ does not contain an induced $r$-cycle for any $r>6$. 
\end{proposition}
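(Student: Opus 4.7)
The plan is to exploit the structure theorem (Theorem \ref{thmg3n}) so heavily that every induced cycle is essentially forced to be a $4$-cycle or the $6$-cycle type exhibited in Lemma \ref{lemcy3}. Let $C$ be any induced cycle in $G=G(3,n)$ of length $r$; I may assume $n\geq 6$, since otherwise $|V|<7$ and there is nothing to prove. Using the notation from the structure theorem, I will partition $V(C)$ according to $V_1,V_2,V_3$ and write $k_i=|V(C)\cap V_i|$ together with $m_X=|V(C)\cap V_3\cap X|$ and $m_Y=|V(C)\cap V_3\cap Y|$.

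First I would dispose of the trivial case $k_1=k_2=0$: then $V(C)\subseteq V_3$, but $V_3$ induces the path $P$, so no cycle is possible. The next step is the key adjacency observation, which is really the heart of the proof. By Theorem \ref{thmg3n}, for any $u\in V_1$ the neighborhood $N_G(u)$ is exactly $V_3\cap Y$: $u\in X$ restricts neighbors to $Y=V_2\cup(V_3\cap Y)$; $u$ is not adjacent to any $V_2$-vertex because $V_1\cup V_2$ is an independent set; and $u$ is adjacent to every vertex of $V_3\cap Y$. Consequently, if $u\in V(C)\cap V_1$ then both $C$-neighbors of $u$ lie in $V_3\cap Y$, and inducedness of $C$ forbids any other vertex of $V(C)$ from being in $V_3\cap Y$. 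Thus $m_Y=2$ exactly, and these two vertices are precisely the two $C$-neighbors of $u$. The symmetric argument shows $k_2\geq 1$ forces $m_X=2$.

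With these pinned down, the case analysis closes quickly. If $k_1\geq 1$ and $k_2=0$ then $V(C)\cap Y=V(C)\cap V_3\cap Y$ has exactly two elements, and since $|V(C)\cap X|=|V(C)\cap Y|$ in a bipartite cycle we get $r=4$; the case $k_1=0$, $k_2\geq 1$ is symmetric. Now suppose $k_1,k_2\geq 1$ and write $V(C)\cap V_3\cap Y=\{p,q\}$. Every $V_1$-vertex of $C$ has $\{p,q\}$ as its pair of $C$-neighbors, so if $k_1\geq 2$ (with $u_1,u_2\in V_1\cap V(C)$) then $p$ and $q$ each already have $u_1$ and $u_2$ as their two $C$-neighbors, forcing $C$ to be the closed $4$-cycle $(u_1,p,u_2,q,u_1)$. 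But this contradicts $k_2\geq 1$, so $k_1=1$; symmetrically $k_2=1$. Then $r=k_1+k_2+m_X+m_Y=1+1+2+2=6$. In every possible case $r\leq 6$, which proves the proposition.

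The only point requiring genuine care is the ``full adjacency'' observation $N_G(u)=V_3\cap Y$ for $u\in V_1$; it is precisely this complete bipartite adjacency between $V_1$ (resp. $V_2$) and $V_3\cap Y$ (resp. $V_3\cap X$) that forbids any vertex of $V_3\cap Y$ in $V(C)$ from being a non-cycle-neighbor of a $V_1$-vertex of $V(C)$, and this chord constraint is strong enough to cap $r$ at $6$. Once this observation is in place, everything else is a short incidence count on the four vertex classes $V_1, V_2, V_3\cap X, V_3\cap Y$.
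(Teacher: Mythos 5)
Your proof is correct, and while it rests on exactly the same structural facts as the paper's argument --- $V_1\cup V_2$ independent, $V_3$ inducing a path, and the complete adjacency between $V_1$ and $V_3\cap Y$ (resp.\ $V_2$ and $V_3\cap X$) from Theorem \ref{thmg3n} --- it packages them differently and, in fact, more tightly. The paper argues that an induced cycle cannot contain more than $4$ path vertices because a $V_1$- or $V_2$-vertex on the cycle would then have degree exceeding $2$, and then finishes with a somewhat loose count of how many further path vertices the two forced neighbours of a $V_1$-vertex can reach (``adjacent to at most $4$ more vertices on the path''), finally invoking $V_2$ to force a chord. You instead sharpen the same observation into the exact statement $N_G(u)=V_3\cap Y$ for $u\in V_1$, conclude that an induced cycle containing a $V_1$-vertex meets $V_3\cap Y$ in precisely the two cycle-neighbours of that vertex (so $m_Y=2$, and symmetrically $m_X=2$ when $k_2\geq 1$), and then close everything with the identity $r=k_1+k_2+m_X+m_Y$ plus the easy deduction that $k_1=k_2=1$ in the mixed case. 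This yields $r\in\{4,6\}$ by pure counting on the four classes, never using the internal path structure of $P$ except to exclude cycles lying entirely in $V_3$, and it quietly repairs the imprecision in the paper's ``at most four more vertices'' step. Both approaches are valid; yours trades the paper's ad hoc case discussion for a cleaner incidence count and would be the easier one to generalize to $G(p,n)$, where the analogous complete adjacencies between the independent set and the special paths again hold.
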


\begin{proof}
We have $V_1\cup V_2$ is an independent set in $G(3,n)$ and for any three consecutive vertices in the path $P$ induced by $V_3$, the first and the last are adjacent to either all vertices of $V_1$ or all vertices of $V_2$. Now every vertex on $P$ is either adjacent to all vertices of $V_1$ or adjacent to all vertices of $V_2$. Thus if we have more than $4$ vertices from $P$ in the cycle, then the degree of a vertex in $V_1$ or $V_2$ would be more than $2$ in the cycle, which is a contradiction. Also one cannot form a cycle only with vertices of $P$. Suppose we have only two vertices on $P$ which are adjacent to a vertex in $V_1$. Now these two vertices are adjacent to at most $4$ more vertices on the path. Thus, in order to form a cycle with more than $6$ vertices, we need to include vertices from $V_2$. But then it would lead to the contradiction stated above. Similar contradiction arises if we interchange the role of $V_1$ and $V_2$ in the above argument. Thus any cycle of length more than $6$ must have a chord and there are no induced cycles of length greater than $6$. Since there are no odd cycles, the result follows.
\end{proof}

\begin{theorem}\label{hamg3n}
The graphs $G(3,n)$ have Hamiltonian paths for all $n\in\Nat$ and they are Hamiltonian for all even $n>2$.
\end{theorem}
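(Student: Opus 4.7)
The plan is to use Theorem \ref{thmg3n} to view $G(3,n)$ (for $n\ge 6$) as a backbone path $P=q_1q_2\cdots q_m$ on $V_3$ together with the independent set $V_1\cup V_2$, where each $a\in V_1$ is adjacent to every $Y$-vertex of $P$ and each $b\in V_2$ is adjacent to every $X$-vertex of $P$. First I would dispatch the small cases $n\le 5$ from the explicit descriptions in Theorem \ref{thmg3n}: $K_1,K_2,P_3$ and the $C_4$-with-pendant $G(3,5)$ all have visible Hamilton paths, and $G(3,4)\cong C_4$ is itself a Hamilton cycle.

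For even $n\ge 6$ the strategy is to build a Hamilton cycle by weaving the connectors in $V_1\cup V_2$ into $P$. Because $V_1\cup V_2$ is independent, every $a\in V_1$ must sit between two $Y$-vertices of $P$ and every $b\in V_2$ between two $X$-vertices of $P$ in any Hamilton cycle; a simple edge count then forces exactly $|V_1|+|V_2|-1$ of the $|V_3|-1$ path edges of $P$ to be skipped, so that $P$ is broken into $|V_1|+|V_2|$ subpaths threaded by the connectors. My concrete construction is to split $P$ into consecutive blocks of length $4$ (with one exceptional block of length $2$ or $3$ when $m\not\equiv 0\pmod 4$). Inside a typical block $q_jq_{j+1}q_{j+2}q_{j+3}$ I insert a single connector $c$ via the zigzag
\[
q_j\ -\ q_{j+1}\ -\ c\ -\ q_{j+3}\ -\ q_{j+2},
\]
choosing $c\in V_1$ if $q_j\in X$ and $c\in V_2$ if $q_j\in Y$; both endpoints of the resulting zigzag then lie in the same partite set as $q_j$. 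Consecutive zigzags are spliced at these endpoints by connectors of the opposite type, and a final splice closes the cycle.

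For odd $n$ the analogous construction with the final splice omitted yields a Hamilton path, whose two endpoints necessarily fall in the larger partite set $Y$. An alternative, using Proposition \ref{thmgind}, is to take the Hamilton cycle of $G(3,n-1)$, break it at an edge incident to a $V_3$-neighbour of the new vertex $2n$, and attach $2n$ (which in $G(3,n)$ is adjacent to its path-neighbour in $V_3$ and to every $V_1$-vertex) as a fresh endpoint.

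The main obstacle is the bookkeeping in the case analysis modulo $6$: the residue $n\pmod 6$ simultaneously controls the parity of $m=|V_3|$, the partite sets of the two endpoints of $P$, and the difference $|V_2|-|V_1|$. In particular, when $n\equiv 4\pmod 6$ the backbone $P$ has odd length and $|V_2|=|V_1|+1$, so one $4$-block must be replaced by a $3$-block of consecutive path vertices (with $X$-endpoints and no internal connector) to absorb the extra $V_2$-vertex. One has to verify in each of the three even residue classes (and their odd counterparts) that the prescribed block-decomposition of $P$ exactly matches the number and type of available connectors.
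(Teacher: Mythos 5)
Your strategy is the one the paper itself follows: use Theorem \ref{thmg3n} to reduce the problem to threading the connectors of $V_1\cup V_2$ through the backbone path $P$ on $V_3$, handle odd $n$ by passing to an induced subgraph, and do the bookkeeping by residue class. Your zigzag $q_j\,\text{--}\,q_{j+1}\,\text{--}\,c\,\text{--}\,q_{j+3}\,\text{--}\,q_{j+2}$ spliced by connectors of the opposite type is exactly the combinatorial shape of the paper's explicit cycles (disjoint edges of $P$ alternating with connectors, orientations flipping so consecutive pieces can be joined). But the concrete prescription fails in the class $n\equiv 2\pmod 6$. There $|V_1|=|V_2|=k$ and $|V_3|=4k+2$, so ``$k$ four-blocks plus one exceptional two-block'' gives $k+1$ blocks in the cycle, hence $k+1$ splicing connectors plus $k$ internal ones, i.e.\ $2k+1$ connector slots against only $2k$ available vertices of $V_1\cup V_2$ --- a cardinality mismatch that no retyping of connectors can repair. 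Already for $n=8$ (where $V_1=\set{12}$, $V_2=\set{6}$, $|V_3|=6$) your scheme demands three connectors but only two exist. The fix, which is what the paper's cycles for $n\equiv 2\pmod 6$ actually do, is to let one segment of \emph{four consecutive} backbone vertices be traversed using all three of its backbone edges and no internal connector (the paper uses the central segment $(4,2,8,14)$, as in $C_8=(2,8,14,12,10,16,6,4,2)$); equivalently, exit your two-block along the backbone edge into the adjacent block instead of through a splice. That restores the count to $2k$.

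With that correction, and the $V_1$-versus-$V_2$ type balance that you already flag as needing verification in each class (it does work out: all block entry/exit vertices lie at odd positions of $P$, hence in one partite set, so internal and splicing connectors come from the two independent sets in the required numbers), your construction matches the paper's. Two smaller remarks. First, for odd $n$ the clean route is the paper's downward reduction: delete the vertex $2n$ from the Hamiltonian cycle of $G(3,n)$ (for $n$ even) to get a Hamiltonian path of $G(3,n-1)$. Your alternative of attaching $2n$ to a Hamiltonian cycle of $G(3,n-1)$ breaks when $3\mid n$, since then $2n\in V_1\cup V_2$ rather than $V_3$ and it has neither a path-neighbour nor the adjacencies you assert. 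Second, your ``analogous construction with the final splice omitted'' for odd $n$ is not well defined as stated, because the sizes of $V_1$, $V_2$, $V_3$ change when $n$ changes parity; the deletion argument avoids this entirely.
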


\begin{proof}
The graph $G(3,n)$ is itself a path for each $n\leq 3$. Next we note that from any Hamiltonian cycle for the graph $G(3,n)$ $(n\geq 4)$, if we drop the vertex $2n$, it will be a Hamiltonian path for the graph $G(3,n-1)$, as the graph $G(3,n-1)$ is an induced subgraph of $G(3,n)$ by Theorem \ref{thmgind}. Thus it is sufficient to show Hamiltonian cycles of graphs for even $n\geq 4$.

We define $V_1,V_2,V_3,V_{31},V_{32}$ as in the proof of Theorem \ref{thmg3n}. Let us write\\ $V_{31}=\set{2,8,14,20,26,\ldots}=\set{x_1,x_2,x_3,\ldots}$ and $V_{32}=\set{4,10,16,22,28,\ldots}=\set{y_1,y_2,y_3,\ldots}$. Also let $V_1=\set{12,24,36,\ldots}=\set{u_1,u_2,u_3,\ldots}$ and 
$V_2=\set{6,18,30,\ldots}=\set{v_1,v_2,v_3,\ldots}$, arranging vertices in the increasing order of numerical values for each set. We will see the ordering of elements on the path induced by $V_3$ are important to preserve whereas ordering of elements of $V_1$ and $V_2$ are not required as every vertex of $V_3$ is either adjacent to all vertices of $V_1$ or all vertices of $V_2$ according to their belonging in partite sets. We need only their size. Thus to understand the pattern of following cycles we use {\boldmath $U$} and $\text{{\boldmath $V$}}$ to represent an element of $V_1$ and $V_2$ respectively. Now, as $n$ increases, the appearance of a new element follows the sequence below.
\begin{equation}\label{eqap}
V = \set{2,4,6,8,10,\ldots}=\set{x_1,y_1,v_1,x_2,y_2,u_1,x_3,y_3,v_2,x_4,y_4,u_2,\ldots}
\end{equation}
The following are Hamiltonian cycles $C_n$ of $G(3,n)$ for $n\geq 4$.\\[0.5em]
\noindent 
{\footnotesize $n=4$, $V=\set{x_1,y_1,v_1,x_2}$, $C_4=(x_1,x_2,\text{{\boldmath $V$}},y_1,x_1)$.\\[1em]
$n=6$, $V=\set{x_1,y_1,v_1,x_2,y_2,u_1}$, $C_6=(x_1,x_2,\text{{\boldmath $V$}},y_1,y_2,\text{{\boldmath $U$}},x_1)$.\\
$n=8$, $V=\set{x_1,y_1,v_1,x_2,y_2,u_1,x_3,y_3}$, $C_8=(x_1,x_2,x_3,\text{{\boldmath $U$}},y_2,y_3,\text{{\boldmath $V$}},y_1,x_1)$.\\
$n=10$, $V=\set{x_1,y_1,v_1,x_2,y_2,u_1,x_3,y_3,v_2,x_4}$, $C_{10}=(x_1,x_2,\text{{\boldmath $V$}},x_4,x_3,\text{{\boldmath $U$}},y_2,y_3,\text{{\boldmath $V$}},y_1,x_1)$.\\[1em]
$n=12$, $V=\set{x_1,y_1,v_1,x_2,y_2,u_1,x_3,y_3,v_2,x_4,y_4,u_2}$, $C_{12}=(x_1,x_2,\text{{\boldmath $V$}},x_4,x_3,\text{{\boldmath $U$}},y_4,y_3,\text{{\boldmath $V$}},y_1,y_2,\text{{\boldmath $U$}},x_1)$.\\
$n=14$, $V=\set{x_1,y_1,v_1,x_2,y_2,u_1,x_3,y_3,v_2,x_4,y_4,u_2,x_5,y_5}$,\\
\null\hfill $C_{14}=(x_1,x_2,x_3,\text{{\boldmath $U$}},x_5,x_4,\text{{\boldmath $V$}},y_5,y_4,\text{{\boldmath $U$}},y_2,y_3,\text{{\boldmath $V$}},y_1,x_1)$.\\
$n=16$, $V=\set{x_1,y_1,v_1,x_2,y_2,u_1,x_3,y_3,v_2,x_4,y_4,u_2,x_5,y_5,v_3,x_6}$,\\
\null\hfill $C_{16}=(x_1,x_2,\text{{\boldmath $V$}},x_4,x_3,\text{{\boldmath $U$}},x_5,x_6,\text{{\boldmath $V$}},y_5,y_4,\text{{\boldmath $U$}},y_2,y_3,\text{{\boldmath $V$}},y_1,x_1)$.}

Thus we can follow the above pattern which is same for $\Mod{n}{i}{3}$, $i=0,2,1$. Note that along these cycles we have used all the vertices to make it a spanning cycle and the adjacencies are following the rule for graphs $G(3,n)$. In the following, we provide the complete list of general construction of Hamiltonian cycles for all $n\geq 4$, ($n\in\Nat$) 

\vspace{1em}
\noindent
{\bf Case I:}\ $n=6k$, $k\in\Nat$, $k$ is odd.

\vspace{1em}
\noindent
$(x_1,x_2,\ 6,\ x_4,x_3,\ 12,\ x_5,x_6,\ 18,\ldots,$\\
 $x_{2k-5},x_{2k-4},\ 6(k-2),\ x_{2k-2},x_{2k-3},\ 6(k-1),\ x_{2k-1},x_{2k},\ 6k,$\\
$y_{2k-1},y_{2k},\ 6(k+1), \ y_{2k-2},y_{2k-3},\ 6(k+2),\ y_{2k-5},y_{2k-4},\ 6(k+3),$\\
$\ldots, 6(2k-3),\ y_5,y_6,\ 6(2k-2),\ y_4,y_3,\ 6(2k-1),\ y_1,y_2,\ 6(2k),\ x_1)$.

\vspace{1em}
\noindent
{\bf Case II:}\ $n=6k$, $k\in\Nat$, $k$ is even.

\vspace{1em}
\noindent
$(x_1,x_2,\ 6,\ x_4,x_3,\ 12,\ x_5,x_6,\ 18,\ldots,$\\
 $x_{2k-4},x_{2k-5},\ 6(k-2),\ x_{2k-3},x_{2k-2},\ 6(k-1),\ x_{2k},x_{2k-1},\ 6k,$\\
$y_{2k},y_{2k-1},\ 6(k+1), \ y_{2k-3},y_{2k-2},\ 6(k+2),\ y_{2k-4},y_{2k-5},\ 6(k+3),$\\
$\ldots, 6(2k-3),\ y_5,y_6,\ 6(2k-2),\ y_4,y_3,\ 6(2k-1),\ y_1,y_2,\ 6(2k),\ x_1)$.

\vspace{1em}
\noindent
Note that, in both the above cases, $|V_1|=|V_2|=k$, $|V_{31}|=|V_{32}|=2k$. 

\vspace{1em}
\noindent
{\bf Case III:}\ $n=6k+2$, $k\in\Nat$, $k$ is odd.

\vspace{1em}
\noindent
$(x_1,x_2,x_3\ 12,\ x_5,x_4,\ 6,\ x_6,x_7,\ 24,\ x_{9},x_8,\ 18,\ldots,$\\
 $x_{2k-4},x_{2k-3},\ 6(k-1),\ x_{2k-1},x_{2k-2},\ 6(k-2),\ x_{2k},x_{2k+1},\ 6(k+1),$\\
$y_{2k},y_{2k+1},\ 6(k), \ y_{2k-1},y_{2k-2},\ 6(k+3),\ y_{2k-4},y_{2k-3},\ 6(k+2),$\\
$\ldots, 6(2k-2),\ y_6,y_7,\ 6(2k-3),\ y_5,y_4,\ 6(2k),\ y_2,y_3,\ 6(2k-1),\ y_1,x_1)$.

\vspace{1em}
\noindent
{\bf Case IV:}\ $n=6k+2$, $k\in\Nat$, $k$ is even.

\vspace{1em}
\noindent
$(x_1,x_2,x_3\ 12,\ x_5,x_4,\ 6,\ x_6,x_7,\ 24,\ x_{9},x_8,\ 18,\ldots,$\\
 $x_{2k-3},x_{2k-4},\ 6(k-3),\ x_{2k-2},x_{2k-1},\ 6(k),\ x_{2k+1},x_{2k},\ 6(k-1),$\\
$y_{2k+1},y_{2k},\ 6(k+2), \ y_{2k-2},y_{2k-1},\ 6(k+1),\ y_{2k-3},y_{2k-4},\ 6(k+4),$\\
$\ldots, 6(2k-2),\ y_6,y_7,\ 6(2k-3),\ y_5,y_4,\ 6(2k),\ y_2,y_3,\ 6(2k-1),\ y_1,x_1)$.

\vspace{1em}
\noindent
Note that, in both the above cases, $|V_1|=|V_2|=k$, $|V_{31}|=|V_{32}|=2k+1$. 

\vspace{1em}
\noindent
{\bf Case V:}\ $n=6k+4$, $k\in\Nat$, $k$ is odd.

\vspace{1em}
\noindent
$(x_1,x_2,\ 6,\ x_4,x_3,\ 12,\ x_5,x_6,\ 18,\ldots,$\\
 $x_{2k-5},x_{2k-4},6(k-2),x_{2k-2},x_{2k-3},6(k-1),x_{2k-1},x_{2k},6k,x_{2k+2},x_{2k+1},6(k+1)$\\
$y_{2k},y_{2k+1},\ 6(k+2), \ y_{2k-1},y_{2k-2},\ 6(k+3),\ y_{2k-4},y_{2k-3},\ 6(k+4),$\\
$\ldots, 6(2k-2),\ y_6,y_7,\ 6(2k-1),\ y_5,y_4,\ 6(2k),\ y_2,y_3,\ 6(2k+1),\ y_1,x_1)$.

\vspace{1em}
\noindent
{\bf Case VI:}\ $n=6k+4$, $k\in\Nat$, $k$ is even.

\vspace{1em}
\noindent
$(x_1,x_2,\ 6,\ x_4,x_3,\ 12,\ x_5,x_6,\ 18,\ldots,$\\
 $x_{2k-4},x_{2k-5},6(k-2),x_{2k-3},x_{2k-2},6(k-1),x_{2k},x_{2k-1},6k,x_{2k+1},x_{2k+2},6(k+1)$\\
$y_{2k+1},y_{2k},\ 6(k+2), \ y_{2k-2},y_{2k-1},\ 6(k+3),\ y_{2k-3},y_{2k-4},\ 6(k+4),$\\
$\ldots, 6(2k-2),\ y_6,y_7,\ 6(2k-1),\ y_5,y_4,\ 6(2k),\ y_2,y_3,\ 6(2k+1),\ y_1,x_1)$.

\vspace{1em}
\noindent
Note that, in both the above cases, $|V_1|=k$, $|V_2|=k+1$, $|V_{31}|=2k+2$, $|V_{32}|=2k+1$. 

\vspace{1em}
\noindent
Following above formulas, Hamiltonian cycles of $G(3,n)$ for $4\leq n\leq 40$ are provided in Table \ref{thamcycleg3n}.
\end{proof}

\section{The graphs $G(5,n)$}\label{secg5n}

In this section we briefly describe the structure, adjacency matrix pattern and some properties of graphs $G(5,n)$. We begin with an example.

\begin{example}
Let us consider the graph $G(5,30)=(V,E)$. Then the even set, $A=\set{2,4,6,\ldots,60}$ and the odd set $O=\set{1,3,5,7,9,11,13,17,19,21,23,27,29,31,33,37,39,41,43,47,49,51,53,57,59}$.\\ Let $a,b\in A$. Then $a$ and $b$ are adjacent if and only if $\frac{a+b}{2},\frac{|a-b|}{2}\in O$. The graph is bipartite with partite sets $X$ and $Y$ (see Remark \ref{rem1}). It consists of two paths and an independent set. If we separate alternate elements of those paths, then the set of vertices of each path splits into two parts (belonging to two partite sets) and they are adjacent to all vertices of the independent set as well as all vertices of the other path those belong to the opposite partite set. In Figure \ref{gd530}, bold lines between two sets indicate that all vertices of one set are adjacent to all vertices of the other. The Table \ref{tgd530} shows the biadjacency matrix of $G(5,30)$.
\end{example}

\begin{figure}[t]
\begin{center}
\includegraphics[scale=0.4]{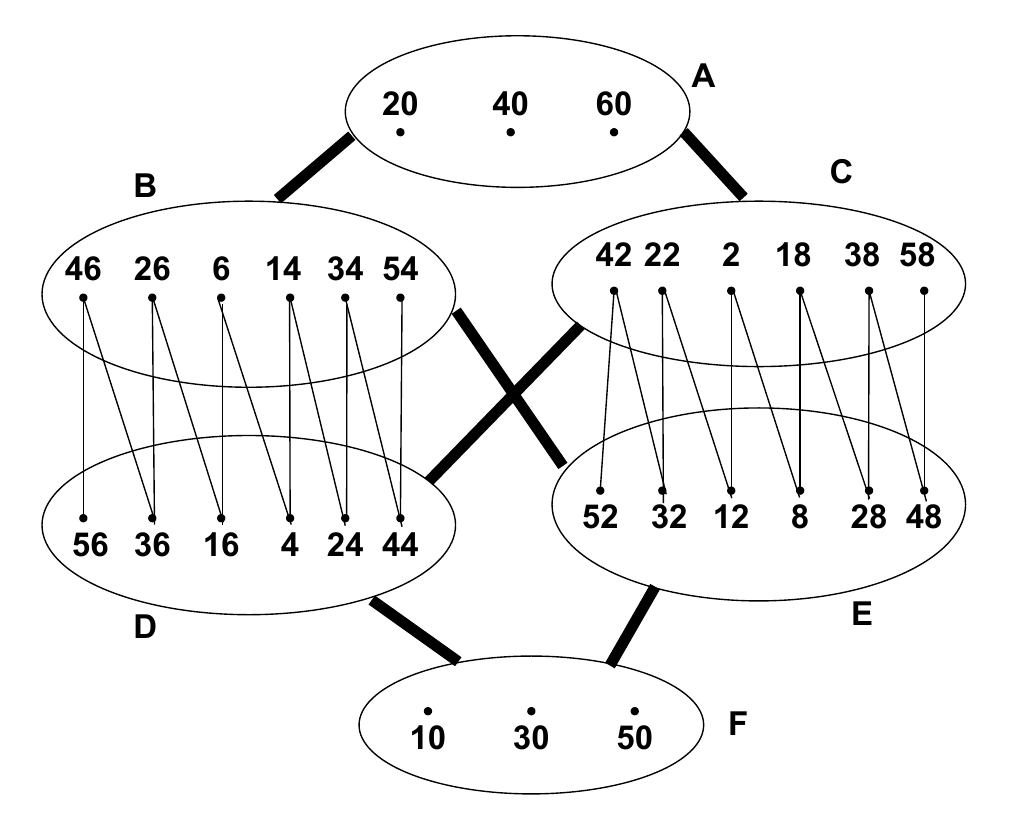}
\caption{The prime multiple missing graph $G(5,30)$}\label{gd530}
\end{center}
\end{figure}

\begin{theorem}\label{thmg5n}
The graph $G(5,n)$ $(n\geq 10)$ is a bipartite graph and consists of an independent set and two paths. The set of vertices of each path splits into two parts (belonging to two partite sets) and they are adjacent to all vertices of the independent set as well as all vertices of the other path those belong to the opposite partite set. Moreover, $G(5,1)\cong K_1$, $G(5,2)\cong K_2$, $G(5,3)\cong P_3$, $G(5,4)\cong C_4$ and $G(5,5)\cong K_{2,3}$, $G(5,6)\cong K_{3,3}$, $G(5,7)\cong K_{3,4}$, $G(5,8)$ is the graph obtained from $K_{4,4}$ by deleting exactly one edge and $G(5,9)$ is obtained from $K_{4,5}$ by deleting two disjoint edges.
\end{theorem}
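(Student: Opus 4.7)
My plan follows the partition strategy used for Theorem \ref{thmg3n}, with a finer decomposition tailored to the prime $5$. Partition $V = \set{2, 4, \ldots, 2n}$ into six classes:
\[
V_1 = \Set{x \in V}{\Mod{x}{0}{20}}, \qquad V_2 = \Set{x \in V}{\Mod{x}{10}{20}},
\]
and $V_{3i} = \Set{x \in V}{\Mod{x}{2i}{10}}$ for $i \in \set{1, 2, 3, 4}$. For $n \geq 10$ all six classes are nonempty; I claim that the independent set promised by the theorem is $V_1 \cup V_2$, that one path is induced by $V_{31} \cup V_{34}$ (joined by the bridge edge $2 \sim 8$), and that the other path is induced by $V_{32} \cup V_{33}$ (joined by the bridge edge $4 \sim 6$).

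Independence of $V_1 \cup V_2$ is immediate: any two distinct vertices here are multiples of $10$ with sum at least $30$, so the half-sum is a multiple of $5$ of value $\geq 15$, hence a non-trivial multiple of $5$ whenever it is odd, ruling out any edge. The heart of the proof is a case analysis on an adjacency between $x = 10a + 2i$ and $y = 10b + 2j$ with $i, j \in \set{1, 2, 3, 4}$. Writing
\[
\frac{x+y}{2} = 5(a+b) + (i+j), \qquad \frac{|x-y|}{2} = |5(a-b) + (i-j)|,
\]
one observes that the former is divisible by $5$ exactly when $i+j \equiv 0 \pmod 5$, i.e.\ when $\set{i,j} \in \set{\set{1,4}, \set{2,3}}$, while the latter is divisible by $5$ exactly when $i = j$. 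From these two observations I read off the skeleton: within each $V_{3i}$ only consecutive elements (differing by $10$) can be adjacent, so $V_{3i}$ induces a path; between $V_{31}$ and $V_{34}$ (respectively $V_{32}$ and $V_{33}$) the sum equals $5(a+b+1)$, which is a non-trivial multiple of $5$ unless $a = b = 0$, so the only cross-edge is the bridge $2 \sim 8$ (respectively $4 \sim 6$). These bridges splice the $V_{31}$- and $V_{34}$-paths into a single path, and likewise splice $V_{32}$ and $V_{33}$.

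For the remaining four cross-pairs $\set{V_{3i}, V_{3j}}$, namely $\set{1,2}$, $\set{1,3}$, $\set{2,4}$, $\set{3,4}$, neither $\frac{x+y}{2}$ nor $\frac{|x-y|}{2}$ is a multiple of $5$, so adjacency collapses to the bipartite condition that $x$ and $y$ lie in opposite partite sets. Each of these four pairs is precisely a cross-pair between the two paths, which yields the required complete bipartite adjacency between opposite-partite parts. A parallel calculation starting from $\frac{x+y}{2} = 10c + 5b + i$ for $x = 20c \in V_1$ and $y = 10b + 2i \in V_{3i}$ (and its analogue for $V_2$) shows that $V_1$ is adjacent to every vertex of $V_3 \cap Y$ and $V_2$ is adjacent to every vertex of $V_3 \cap X$, completing the structural description for $n \geq 10$. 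The small cases $n \leq 9$ follow from Proposition \ref{thmgind}: each such $G(5, n)$ is an induced subgraph of $G(5, 9)$, so its isomorphism type can be read off by direct edge enumeration. I expect the main obstacle to be the careful bookkeeping that aligns parities of $a, b$ with partite-set membership in the cross-class analysis; the decisive simplification is that the two ``exceptional'' cross-pairs (those with $i + j \equiv 0 \pmod 5$) contribute only the two bridge edges and nothing else.
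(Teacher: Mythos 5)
Your decomposition is the paper's own proof in different clothing: your $V_1,V_2$ are its sets $A,F$ (the multiples of $20$ and the numbers $\equiv 10 \pmod{20}$), your two spliced paths $V_{31}\cup V_{34}$ and $V_{32}\cup V_{33}$ are its $C\cup E$ and $B\cup D$ with the same bridge edges $2\sim 8$ and $4\sim 6$, and the verification is the same divisibility-by-$5$ analysis of the half-sum and half-difference together with the reduction of the small cases to induced subgraphs. The argument is correct and takes essentially the same route as the paper.
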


\begin{proof}
Let $G=G(5,n)=(V,E)$ for $n\geq 10$. Let $X=\Set{x\in V}{\Mod{x}{0}{4}}$\\ 
$=A\cup D\cup E$ and $Y=\Set{y\in V}{\Mod{y}{2}{4}}=F\cup B\cup C$,\\ where $A=\Set{x\in X}{\Mod{x}{0}{5}}$,\\ $D=\Set{x\in X}{\Mod{x}{1}{5}}\cup \Set{x\in X}{\Mod{x}{4}{5}}$ and\\ $E=\Set{x\in X}{\Mod{x}{2}{5}}\cup \Set{x\in X}{\Mod{x}{3}{5}}$. The sets $F,B,C$ are defined similarly as subsets of $Y$. Note that, since $n\geq 10$, all these sets $A,B,C,D,E,F$ are nonempty.

We have $G$ is a bipartite graph with partite sets $X$ and $Y$. For any $x\in X$ and $y\in Y$, $x$ is adjacent to $y$ if and only if $\frac{a+b}{2}$ and $\frac{|a-b|}{2}$ are not a non-trivial multiple of $5$. Thus it follows that the set $A\cup F$ is an independent set. So $A$ and $F$ are independent sets and there are no edges between them. Now consider the set $B\cup D$. First we have $4$ and $6$ are adjacent as $\frac{4+6}{2}=5$ and $\frac{|4-6|}{2}=1$. Except this edge all other edges have end vertices whose numerical difference is $10$. No other vertices are adjacent here as either $\frac{a+b}{2}$ or $\frac{|a-b|}{2}$ would be a non-trivial multiple of $5$. Thus the vertices in $B\cup D$ form a path. Similarly, vertices in the set $C\cup E$ form a path.

Now any element of $B$ is either $4$ modulo $5$ or $1$ modulo $5$. Thus $\frac{a+b}{2}$ and $\frac{|a-b|}{2}$ can never be a multiple of $5$ for any $b\in B$ and $a\in A$ as all elements of $A$ are $0$ modulo $5$. Therefore all elements of $B$ are adjacent with every element of $A$. Also elements of $E$ are either $2$ modulo $5$ or $3$ modulo $5$. Thus elements of $B$ are also adjacent with every element of $E$. Similar connections can be established between $\set{A,C}$, $\set{D,C}$, $\set{D,F}$ and $\set{E,F}$. Thus we have the structure of graphs $G(5,n)$ as described in the statement.
 As in the proof of Theorem \ref{thmg3n}, small graphs can be obtained as induced subgraphs of $G(5,10)$ (see Figure \ref{gd530}).
\end{proof}

\begin{theorem}
Let $G=G(5,n)$. Then there is an ordering of vertices of $G$ such that the biadjacency matrix of $G$ takes the following form:
{\footnotesize $$\begin{array}{c|c|c|c|}
\multicolumn{1}{c}{} & \multicolumn{1}{c}{A} & \multicolumn{1}{c}{D} & \multicolumn{1}{c}{E}\\
\cline{2-4}
F & \mathbf{0} & \mathbf{1} & \mathbf{1}\\
\cline{2-4}
B & \mathbf{1} & A_1 & \mathbf{1}\\
\cline{2-4}
C & \mathbf{1} & \mathbf{1} & A_2\\
\cline{2-4}
\end{array}$$}
where $A_1$ and $A_2$ are path matrices. The sets $A,B,C,D,E,F$ are as in the proof of Theorem \ref{thmg5n}.
\end{theorem}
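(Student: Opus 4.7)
The plan is to treat this result essentially as a matrix-level restatement of Theorem \ref{thmg5n}: the partition of the vertex set already produced in that proof gives the block structure for free, and only the ordering of rows and columns within each block needs attention. I would fix the row ordering $F, B, C$ and the column ordering $A, D, E$, so that the biadjacency matrix splits into the nine blocks $F\times A$, $F\times D$, $F\times E$, $B\times A$, $B\times D$, $B\times E$, $C\times A$, $C\times D$, $C\times E$.

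Next, I would read off each block directly from the adjacency facts proved in Theorem \ref{thmg5n}. The block $F\times A$ is zero because $A\cup F$ is an independent set. The six all-ones blocks $B\times A$, $C\times A$, $B\times E$, $C\times D$, $F\times D$, $F\times E$ correspond to the complete bipartite connections listed in that proof: every vertex of $A$ is adjacent to every vertex of $B$ and of $C$, every vertex of $B$ to every vertex of $E$, and the symmetric arguments for the pairs $\{A,C\}$, $\{D,C\}$, $\{D,F\}$, $\{E,F\}$ yield the remaining complete bipartite connections. Each such connection is an all-ones block irrespective of the internal ordering of the two partite classes, so these entries are determined with no further work.

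For the two remaining blocks $B\times D$ and $C\times E$, Theorem \ref{thmg5n} tells us that $B\cup D$ and $C\cup E$ each induce a path in $G$. I would order the vertices of $B$ and $D$ by walking along the path $B\cup D$ and recording the successive $B$-vertices as row labels and the successive $D$-vertices as column labels, in the same spirit as the definition of \emph{path matrix} already used for $G(3,n)$. By construction the $B\times D$ block then has a $1$ exactly when the corresponding row and column vertices are consecutive on the path, which is precisely the defining pattern of a path matrix $A_1$. Applying the identical recipe to $C\cup E$ produces the path matrix $A_2$. With the full ordering fixed in this way, the biadjacency matrix assumes exactly the claimed block form.

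The main obstacle, such as it is, is purely bookkeeping: one must check that the alternating enumerations inside $B\cup D$ and inside $C\cup E$ are started at the correct endpoint so that the resulting block really is a path matrix and not its transpose or reverse; because each of these paths has one endpoint in each partite class of $G$, this is resolved just by choosing the starting endpoint appropriately. No new combinatorial argument beyond Theorem \ref{thmg5n} is needed, and the theorem follows once the ordering has been made precise.
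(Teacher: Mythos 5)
Your proposal is correct and follows the same route as the paper, which simply states that the result ``follows directly from Theorem \ref{thmg5n}'' and omits the details; you have filled in exactly the bookkeeping the paper leaves implicit (the zero block from the independence of $A\cup F$, the all-ones blocks from the complete bipartite connections, and the path-matrix blocks from ordering along the paths in $B\cup D$ and $C\cup E$).
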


\begin{proof}
The proof follows directly from Theorem \ref{thmg5n} and hence omitted.
\end{proof}

\begin{table}[ht]
{\tiny 
$$\begin{array}{c|ccc|cccccc|cccccc|}
 \multicolumn{1}{c}{} & 20 & 40 & \multicolumn{1}{c}{60} & 56 & 36 & 16 & 4 & 24 & \multicolumn{1}{c}{44} & 52 & 32 & 12 & 8 & 28 & \multicolumn{1}{c}{48} \\
\cline{2-16}
 10 & 0 & 0 & 0 & 1 & 1 & 1 & 1 & 1 & 1 & 1 & 1 & 1 & 1 & 1 & 1 \\
 30 & 0 & 0 & 0 & 1 & 1 & 1 & 1 & 1 & 1 & 1 & 1 & 1 & 1 & 1 & 1 \\
 50 & 0 & 0 & 0 & 1 & 1 & 1 & 1 & 1 & 1 & 1 & 1 & 1 & 1 & 1 & 1 \\
\cline{2-16}
 46 & 1 & 1 & 1 & 1 & 1 & 0 & 0 & 0 & 0 & 1 & 1 & 1 & 1 & 1 & 1 \\
 26 & 1 & 1 & 1 & 0 & 1 & 1 & 0 & 0 & 0 & 1 & 1 & 1 & 1 & 1 & 1 \\
 6 & 1 & 1 & 1 & 0 & 0 & 1 & 1 & 0 & 0 & 1 & 1 & 1 & 1 & 1 & 1 \\
 14 & 1 & 1 & 1 & 0 & 0 & 0 & 1 & 1 & 0 & 1 & 1 & 1 & 1 & 1 & 1 \\
 34 & 1 & 1 & 1 & 0 & 0 & 0 & 0 & 1 & 1 & 1 & 1 & 1 & 1 & 1 & 1 \\
 54 & 1 & 1 & 1 & 0 & 0 & 0 & 0 & 0 & 1 & 1 & 1 & 1 & 1 & 1 & 1 \\
\cline{2-16}
 42 & 1 & 1 & 1 & 1 & 1 & 1 & 1 & 1 & 1 & 1 & 1 & 0 & 0 & 0 & 0 \\
 22 & 1 & 1 & 1 & 1 & 1 & 1 & 1 & 1 & 1 & 0 & 1 & 1 & 0 & 0 & 0 \\
 2 & 1 & 1 & 1 & 1 & 1 & 1 & 1 & 1 & 1 & 0 & 0 & 1 & 1 & 0 & 0 \\
 18 & 1 & 1 & 1 & 1 & 1 & 1 & 1 & 1 & 1 & 0 & 0 & 0 & 1 & 1 & 0 \\
 38 & 1 & 1 & 1 & 1 & 1 & 1 & 1 & 1 & 1 & 0 & 0 & 0 & 0 & 1 & 1 \\
 58 & 1 & 1 & 1 & 1 & 1 & 1 & 1 & 1 & 1 & 0 & 0 & 0 & 0 & 0 & 1 \\
\cline{2-16}
\end{array}$$}
\caption{The biadjacency matrix of the graph $G(5,30)$}\label{tgd530}
\end{table}

\begin{theorem}
The graphs $G(5,n)$ are connected with diameter at most $3$ for all $n\in\Nat$.
\end{theorem}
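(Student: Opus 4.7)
The plan is to split on $n$ and leverage the structural description in Theorem~\ref{thmg5n}. For $n \leq 9$, Theorem~\ref{thmg5n} explicitly identifies each $G(5,n)$ (as $K_1$, $K_2$, $P_3$, $C_4$, $K_{2,3}$, $K_{3,3}$ (twice), $K_{4,4}$ with one edge removed, or $K_{4,5}$ with two disjoint edges removed), and each of these graphs has diameter at most $3$ by direct inspection.

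For $n \geq 10$, I would invoke the six-block decomposition $X = A \cup D \cup E$, $Y = F \cup B \cup C$ produced in the proof of Theorem~\ref{thmg5n}, under which all six blocks are nonempty. The biadjacency pattern from the subsequent theorem encodes the following block-level structure: six of the nine cross-partite block pairs, namely $\{A,B\}, \{A,C\}, \{B,E\}, \{C,D\}, \{D,F\}, \{E,F\}$, are joined by complete bipartite subgraphs and form a hexagon $A - B - E - F - D - C - A$; the remaining three cross-partite pairs $\{A,F\}$, $\{B,D\}$, $\{C,E\}$ are either edge-free or joined only via a path matrix.

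Fix $a_0 \in A$ and let $u,v$ be any two vertices. I would establish $d(u,v) \leq 3$ by a short case check on the blocks containing $u$ and $v$. If $u,v$ lie in the \emph{same partite set}, then a brief check of the three possible block pairs on each side shows that they always share a fully adjacent opposite block: $\{A,D\}$ shares $C$, $\{A,E\}$ shares $B$, $\{D,E\}$ shares $F$, and symmetrically for $Y$; thus $u,v$ admit a common neighbor and $d(u,v) \leq 2$. If $u,v$ lie in \emph{opposite partite sets} and are non-adjacent, they must occupy one of the three non-complete pairs, and I exhibit explicit length-$3$ paths whose edges are hexagon edges: $u - b - e - v$ for $u \in A, v \in F$ (any $b \in B$, $e \in E$); $u - a_0 - c - v$ for $u \in B, v \in D$ (any $c \in C$); and $u - a_0 - b - v$ for $u \in C, v \in E$ (any $b \in B$).

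The only subtle point—and the reason for the threshold $n \geq 10$—is that every hub block called upon above must be nonempty. Once the block decomposition of Theorem~\ref{thmg5n} is available, the argument reduces to a finite case check along the block hexagon, so I do not anticipate any real obstacle; the small cases are handled separately precisely because one or more blocks degenerate.
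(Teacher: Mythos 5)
Your proposal is correct and follows essentially the same route as the paper: both arguments rest on the six-block decomposition $X=A\cup D\cup E$, $Y=F\cup B\cup C$ from Theorem~\ref{thmg5n} and the complete bipartite joins between the six ``hexagon'' block pairs, with small $n$ handled via the explicit isomorphism types. If anything, your version is slightly more careful than the paper's, which asserts all six blocks are nonempty already for $n\geq 6$ even though $A=\{20,40,\ldots\}$ is empty until $n=10$; your threshold $n\geq 10$ is the right one.
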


\begin{proof}
Let $n\geq 6$. We continue the same definitions for the sets $A,B,C,D,E,F$. All these sets are nonempty as $n\geq 6$. First consider the set $B$. Let $x\in B$. Since $x$ is adjacent to all vertices of $A$ and $E$, the distance between $x$ and any one of them is $1$. Also the distance of $x$ from any other vertex in $B$ itself or from any vertex of $C$ is $2$ via any vertex of $A$. Since any vertex of $C$ is adjacent to any vertex of $D$, the distance of $x$ from any vertex of $D$ is at most $3$. Similarly as vertices of $E$ and $F$ are adjacent with each other, the distance of $x$ from any vertex of $F$ is at most $3$. 
 
Next we consider the set $A$. Let $a\in A$. Then the distance of $a$ from any vertex of $B$ or $C$ is $1$, from any vertex of $D$ or $E$ is $2$ and from any vertex of $F$ is $3$. Similarly, we can show that the distance between any two vertices of $G$ is at most $3$. In fact, the diameter is exactly $3$ for $n\geq 6$ as $(10,4,2,20)$ is one of the shortest path between $10$ and $20$. Finally for small graphs the proof follows directly from Theorem \ref{thmg5n}.
\end{proof}

\begin{theorem}\label{thmham5}
The graphs $G(5,n)$ have Hamiltonian paths for all $n\in\Nat$ and they are Hamiltonian for all even $n>2$.
\end{theorem}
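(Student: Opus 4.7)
The plan is to follow the template of Theorem~\ref{hamg3n}. Since $G(5,n-1)$ is an induced subgraph of $G(5,n)$ by Proposition~\ref{thmgind}, deleting the vertex $2n$ from a Hamilton cycle of $G(5,n)$ yields a Hamilton path of $G(5,n-1)$; hence it suffices to construct Hamilton cycles for all even $n\ge 4$, after which the odd-$n$ Hamilton paths follow automatically. For $4\le n\le 9$, Theorem~\ref{thmg5n} identifies $G(5,n)$ with $C_4$, $K_{2,3}$, $K_{3,3}$, $K_{3,3}$, $K_{4,4}$ minus one edge, and $K_{4,5}$ minus two disjoint edges respectively, for which the required Hamilton cycles (even $n$) and Hamilton paths (odd $n$) are immediate.

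For $n\ge 10$, I would exploit the decomposition from Theorem~\ref{thmg5n}: $V=A\cup F\cup B\cup C\cup D\cup E$, with $A\cup F$ an independent set, $P_1:=B\cup D$ and $P_2:=C\cup E$ each a path, and the six cross-adjacencies $A$--$B$, $A$--$C$, $F$--$D$, $F$--$E$, $D$--$C$, and $E$--$B$ each complete bipartite. The cycle is to be assembled as follows: start at an endpoint of $P_1$ and march along $P_1$ towards its other endpoint; at each consecutive $B,D,B$ or $C,E,C$ subpath, optionally splice in a vertex of $A$ (which is adjacent to all of $B\cup C$), and at each consecutive $D,B,D$ or $E,C,E$ subpath, optionally splice in a vertex of $F$ (which is adjacent to all of $D\cup E$). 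At a strategically chosen $D$-vertex, cross to $P_2$ via a $D$--$C$ edge, traverse $P_2$ with the same splicing scheme, and eventually cross back via an $E$--$B$ edge to close the cycle at the starting vertex. This is the same weaving trick used in Theorem~\ref{hamg3n}, extended to two path strata and two independent-set strata.

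The main obstacle is the case analysis. The class of the newly added vertex $2n$ (which of $A,\ldots,F$ it belongs to) is governed by $n\bmod 10$, so one expects roughly five sub-cases among even $n$, possibly refined according to the sizes of $A,B,C,D,E,F$ in order to respect the bipartite balance $|X|=|Y|$ that every Hamilton cycle in a bipartite graph demands (this balance holds for even $n$ by direct counting). Once the cycles are written down explicitly, verification of each edge is immediate from Theorem~\ref{thmg5n}, since only path edges of $P_1$ or $P_2$, cross-path edges $D$--$C$ or $E$--$B$, and independent-set-insertion edges ($A$ between two $B\cup C$ vertices, $F$ between two $D\cup E$ vertices) occur. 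A table of explicit cycles for small $n$ analogous to Table~\ref{thamcycleg3n}, combined with a periodic extension by $n\mapsto n+10$, should complete the proof.
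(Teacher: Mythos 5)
Your reduction steps are sound and match the paper's: the passage from Hamiltonian cycles for even $n$ to Hamiltonian paths for odd $n$ via Proposition~\ref{thmgind}, the treatment of $4\le n\le 9$ through the explicit identifications in Theorem~\ref{thmg5n}, and the observation that the bipartite balance $|X|=|Y|$ forces you to restrict cycles to even $n$. The gap is in the part that carries the entire weight of the theorem: you never actually produce the cycles for general $n\ge 10$. You describe a splicing scheme and then write that the case analysis ``should complete the proof,'' but that case analysis \emph{is} the proof. Moreover, the scheme as stated has an unexamined obstruction: a vertex $a\in A$ is adjacent to $B\cup C$ but not to $D$, so you cannot insert $a$ \emph{into} a $B,D,B$ subpath while keeping $d$ in place; you must instead break the traversal of $P_1$ after a $B$-vertex, visit $a$, and then jump to a different $B$- or $C$-vertex, which destroys the linear order of the path and forces global bookkeeping (which $D$-vertices get stranded, how they are recovered via $C$ or $F$, and whether the number of available $B\cup C$ transitions matches $|A|$ and the $D\cup E$ transitions match $|F|$). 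None of this is checked, and it is exactly where such constructions tend to fail for particular residues of $n$ modulo $10$.

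For comparison, the paper sidesteps the six-part decomposition entirely at this stage. It exhibits one explicit spanning sequence
$(2,4,6,8,10,12,14,\ 24,22,20,18,16,\ 26,28,30,32,34,\ 44,42,40,38,36,\ \ldots)$,
in which, after $14$, the even numbers are taken in blocks of five with alternate blocks reversed; a two-line congruence check (consecutive entries differ by $2$ within a block and by $10$ across blocks, and the half-sums avoid non-trivial multiples of $5$) shows this is a path. The cycle is then closed by an explicit end-game for each residue $i$ of $n$ modulo $10$, with the base cases $n\in\{8,10,12,14,16\}$ and a verification table. If you want to salvage your stratified approach, you would need to write down the cycles for one full period of residues and verify the splice counts; as it stands, the proposal is a plausible plan rather than a proof.
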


\begin{proof}
The graph $G(5,n)$ is itself a path for $n<4$. The graph $G(5,4)$ is a cycle. Now $(2,4,6,8,10)$ is a Hamiltonian path of $G(5,5)$ and $(2,4,6,8,10,12,2)$ is a Hamiltonian cycle in $G(5,6)$. Also $(2,4,6,8,10,12,14)$ is a Hamiltonian path of $G(5,7)$. In the following we exhibit Hamiltonian cycles for $n\in H=\set{8,10,12,14,16}$. Following these we can obtain Hamiltonian cycle for any larger $n$ which is congruent to any element of $H$ modulo $10$ with the same pattern. Also if $G(5,2n)$ $(n>1)$ is Hamiltonian, then deleting the vertex $4n$ we get a Hamiltonian path for $G(5,2n-1)$.\\[1em]
$\begin{array}{ll}
n=8 & (2,4,6,8,14,12,10,16,2)\\
n=10 & (2,4,6,8,10,12,14,20,18,16,2)\\
n=12 & (2,4,6,8,10,12,14,24,22,20,18,16,2)\\
n=14 & (2,4,6,8,10,12,14,24,22,20,18,28,26,16,2)\\
n=16 & (2,4,6,8,10,12,14,24,22,32,30,28,26,16,18,20,2)
\end{array}$\\[1em]
We first note that the following sequence of even integers is a path for graphs $G(5,n)$ (for appropriate $n\geq 8$):
$$(2,4,6,8,10,12,14,\ 24,22,20,18,16,\ 26,28,30,32,34,\ 44,42,40,38,36,\ 46,48,\ldots)$$
That is, after $14$, numbers are divided into blocks of $5$ and they are reversed alternatively. Within each block, for any $2$ consecutive numbers $x,y$, we have $\nMod{x+y}{0}{10}$ and $\frac{|x-y|}{2}=1$. So $x\leftrightarrow y$ in $G(5,n)$. If $x$ is the last number of a block and $y$ is the first number of the next block, then both of them are either $4$ or $6$ modulo $10$. So $\nMod{x+y}{0}{10}$ and $\frac{|x-y|}{2}=5$. Thus $x\leftrightarrow y$ in $G(5,n)$. Therefore, the above sequence is a path in $G(5,n)$, except possibly the last broken block of the sequence that has less than $5$ elements.

Below we describe how to obtain a Hamiltonian cycle of $G(5,n)$ by modifying the above path for $n$ is congruent $2$ or $4$ modulo $10$. 

If $\Mod{n}{2}{10}$, i.e., $n=10m+2$ for some $m\in\Nat$, then we note that $2n=20m+4$. Now the block of this number in the above sequence is reversed as $20m+4,20m+2,20m,20m-2,20m-4$. We note that $\frac{(20m-4)+2}{2}=10m-1$ and $\frac{(20m-4)-2}{2}=10m-3$, both of which are odd and not a multiple of $5$. Thus $20m-4$ is adjacent to $2$. So the last member in the block is adjacent to the first in the above sequence and hence we get a Hamiltonian cycle. For example, \\
{\footnotesize $(2,4,6,8,10,12,14,24,22,20,18,16,26,28,30,32,34,44,42,40,38,36,2)$}
is a Hamiltonian cycle in $G(5,22)$.\\
This is the simplest case. Others require more workout.  

Suppose $\Mod{n}{4}{10}$. Then let $n=10m+4$ for some $m\in\Nat$. Now the block of $2n=20m+8$ consists of only two elements, $20m+6,20m+8$ and its previous block is $20m+4,20m+2,20m,20m-2,20m-4$. We carry on the above sequence before this block as it is. Then we put\\ $20m+4,20m+2,20m,20m-2,20m+8,20m+6,20m-4,2$. We verify that $\frac{(20m-2)+(20m+8)}{2}=20m+3$, $\frac{|(20m-2)-(20m+8)|}{2}=5$, $\frac{(20m+6)+(20m-4)}{2}=20m+1$,\\ $\frac{|(20m+6)-(20m-4)|}{2}=5$ and $\frac{(20m-4)+2}{2}=10m-1$, $\frac{|(20m-4)-2|}{2}=10m-3$, none of which is a non-trivial multiple of $5$. Thus we get a Hamiltonian cycle. For example, the following is a Hamiltonian cycle in $G(5,24)$:\\
$(2,4,6,8,10,12,14,24,22,20,18,16,26,28,30,32,34,44,42,40,38,48,46,36,2)$.

Now we provide the general formula for constructing Hamiltonian cycles of $G(5,n)$.

Let $\Mod{n}{i}{10}$ and $n=10m+i$ for some $m\in\Nat$, $m\geq 2$. We note that the above path can be written as\\[1em]
$(2,4,6,8,10,12,14,\ 24,22,20,18,16,\ldots, 20m-14,20m-12,20m-10,20m-8,20m-6,$\\
$\ 20m+4,20m+2,20m,20m-2,20m-4,\ 20m+6,20m+8,20m+10,20m+12,20m+14, $\\
$\ 20m+16,\ldots)$.\\[1em]
Now $2n=20m+2i$ and it lies in a reversed block for $i=0,2,8$ and belongs to a straight block for $i=4,6$. Now for all $n\geq 20$, we present the general construction for Hamiltonian cycles of graphs $G(5,n)$, where $\Mod{n}{i}{10}$. One may verify that following are Hamiltonian cycle as we did in two above cases. \\[0.5em]

$i=0:$\ $(2,4,6,8,10,12,14,\ 24,22,20,18,16,\ldots, 20m-6,\ 20m,20m-2, 20m-4,2)$\\

$i=2:$\ $(2,4,6,8,10,12,14,\ 24,22,20,18,16,\ldots, 20m-6,\ 20m+4,$\\
$\null\hfill 20m+2,20m,20m-2,20m-4,2)$\\

$i=4:$\ $(2,4,6,8,10,12,14,\ 24,22,20,18,16,\ldots, 20m-6,\ 20m+4,$\\
$\null\hfill 20m+2,20m,20m-2,20m+8,20m+6,20m-4,2)$\\

$i=6:$\ $(2,4,6,8,10,12,14,\ 24,22,20,18,16,\ldots, 20m-6,\ 20m+4,$\\
$\null\hfill 20m+2,20m+12,20m+10,20m+8,20m+6,20m-4,20m-2,20m,2)$\\

$i=8:$\ $(2,4,6,8,10,12,14,\ 24,22,20,18,16,\ldots, 20m-4,\ 20m+6,$\\
$\null\hfill 20m+8,20m+14,20m+12,20m+10,20m+16,2)$.

\vspace{1em}
\noindent
Following above formulas, Hamiltonian cycles of $G(5,n)$ for $18\leq n\leq 38$ are given in Table \ref{thamcycleg5n}.
\end{proof}

\section{The graphs $G(3,n)\cap G(5,n)$}\label{secg35n}

In order to get an idea of near Goldbach graphs, it is very much important to study intersections of prime multiple missing graphs. We begin with the graph $G(3,5,n)=G(3,n)\cap G(5,n)$. 

\begin{theorem}\label{thmg35n}
Let $n\in\Nat$, $n>12$. The graph $G(3,5,n)=(V,E)$ is a bipartite graph with partite sets $X$ and $Y$ and consists of an independent set $V_1\cup V_2$ $(V_1\subseteq X$, $V_2\subseteq Y)$ and some disjoint paths. One path is isomorphic to $P_{10}$, several others are isomorphic to $P_5$ and one or two paths are of length less than $5$. The adjacency between vertices of paths are alternative for vertices of the independent set, belonging to opposite partite set, provided their sum or difference are not a non-trivial multiple of $10$ (see Table \ref{tabv1v2}). Every vertex in the paths is adjacent to some vertices of $V_1$ or $V_2$ and every vertex of $V_1$ and $V_2$ is adjacent to some vertex of the path $P_{10}$. Any graph $G(3,5,n)= G(3,n)$ for $n\leq 12$.
\end{theorem}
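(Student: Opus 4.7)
The plan is to build on Theorem~\ref{thmg3n} and determine which edges of $G(3,n)$ survive intersection with $G(5,n)$. Since $G(3,5,n)$ is a subgraph of $G(3,n)$, the bipartition with parts $X,Y$ persists and the set $V_1\cup V_2$ (multiples of $12$, resp.\ multiples of $6$ not divisible by $12$, in the notation of the proof of Theorem~\ref{thmg3n}) is still independent. For $n\leq 12$, the odd set of $G(3,n)$ consists of odd integers at most $2n-1\leq 23$, whereas the smallest odd non-trivial multiple of $5$ that is not already a non-trivial multiple of $3$ is $25$; so intersecting with the odd set of $G(5,n)$ removes nothing and $G(3,5,n)=G(3,n)$.

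For $n>12$ the first substantive step is to examine the path induced by $V_3=V_{31}\cup V_{32}$ in $G(3,n)$. A generic edge of that path joins consecutive vertices $a$ and $a+6$ in one arm, with half-sum $a+3$ and half-difference $3$; since $3$ is a trivial multiple of $5$, the edge dies in $G(3,5,n)$ precisely when $a\equiv 2\pmod 5$ and $a\geq 12$. This flags $a=32,62,92,\ldots$ in $V_{31}=\{2,8,14,20,26,32,\ldots\}$ and $a=22,52,82,\ldots$ in $V_{32}=\{4,10,16,22,\ldots\}$, while the bridging edge $(2,4)$, having half-sum $3$ and half-difference $1$, survives. Consequently $V_{31}$ splits into segments $\{2,8,14,20,26,32\}$, $\{38,\ldots,62\}$, $\{68,\ldots,92\},\ldots$ and $V_{32}$ into $\{4,10,16,22\}$, $\{28,\ldots,52\}$, $\{58,\ldots,82\},\ldots$; each complete subsequent segment yields a $P_5$, and the two initial segments merge through $(2,4)$ into the path $(32,26,20,14,8,2,4,10,16,22)\cong P_{10}$. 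Depending on $2n\bmod 30$, the last segment of $V_{31}$ and/or $V_{32}$ may be truncated, producing at most two paths on fewer than $5$ vertices.

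Finally I would describe the cross-edges with $V_1\cup V_2$. For $a\in V_3$ and $b\in V_1\cup V_2$, since $6\mid b$ while $6\nmid a$, neither $a+b$ nor $|a-b|$ is divisible by $6$, so the mod-$3$ conditions are automatic; by Theorem~\ref{thmg3n} such a pair is adjacent in $G(3,n)$ iff it lies in opposite partite sets. Hence adjacency in $G(3,5,n)$ reduces to opposite partite sets together with the requirement that neither $a+b$ nor $|a-b|$ be a non-trivial multiple of $10$, which is precisely the ``alternative'' adjacency rule in the statement (cf.\ Table~\ref{tabv1v2}). For the two existence claims I would use a short modular check: $P_{10}\cap X=\{4,8,16,20,32\}$ realises all five residues modulo $10$ and $P_{10}\cap Y=\{2,10,14,22,26\}$ realises four of them, so for every $b\in V_1\cup V_2$ one can pick $a$ in the opposite-parity part of $P_{10}$ avoiding the at most two forbidden residues $\pm b\pmod{10}$; a symmetric argument shows every path vertex has a neighbor in $V_1\cup V_2$. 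The main obstacle will be the careful bookkeeping modulo $30$ to guarantee that no surviving edge links distinct segments, and that the truncated final blocks and small-$n$ transitional cases (where the $P_{10}$ itself is partial) are described correctly.
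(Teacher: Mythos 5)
Your proposal is correct and follows essentially the same route as the paper: reduce to the structure of $G(3,n)$ from Theorem~\ref{thmg3n}, locate the edges killed by the mod-$5$ condition on half-sums (breaking the path at $32\leftrightarrow 38$, $22\leftrightarrow 28$ and then periodically with period $30$, yielding the $P_{10}$, the $P_5$ strips and the truncated end segments), and describe the surviving cross-edges to $V_1\cup V_2$ via residues modulo $10$. Your pigeonhole on residues for the two existence claims is a slightly tidier packaging of the paper's case-by-case tabulation (Table~\ref{tabv1v2}), and you rightly flag the transitional cases $n=13,14,15$, where the initial path is still shorter than $P_{10}$, which the paper's proof does not address.
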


\begin{proof}
We first note that, since $5^2=25$ is the first non-trivial multiple of $5$ which is not a multiple of $3$, $G(3,5,n)=G(3,n)$ for all $n\leq 12$. Thus we consider graphs $G(3,5,n)=(V,E)$ for $n>12$. As usual let 
$V_1=\Set{x\in V}{\Mod{x}{0}{12}}$ $=\set{12,24,36,48,\ldots}$, $V_2=\Set{x\in V}{\Mod{x}{6}{12}}$ $=\set{6,18,30,36,\ldots}$ and $V_3=V\smallsetminus (V_1\cup V_2)$. Let $V_{31}=\Set{x\in V_3}{\Mod{x}{2}{3}}$ $=\set{2,8,14,20,\ldots}$ and $V_{32}=\Set{x\in V_3}{\Mod{x}{1}{3}}$ $=\set{4,10,16,22,\ldots}$. Now in the graph $G(3,n)$, vertices in $V_{31}$ induce a path, say $P(1)$ and vertices in $V_{32}$ induce a path, say $P(2)$, where vertices are consecutive according to the increasing order. These two paths are connected by the edge $2\leftrightarrow 4$ and the combined path is denoted by $P$ (see Theorem \ref{thmg3n}).
 
It is clear that the graph $G(3,5,n)$ can be obtained from the graph $G(3,n)$ by deleting edges $xy$, where either $\frac{x+y}{2}$ or $\frac{x-y}{2}$ is a non-trivial multiple of $5$. Now along the path $P$, $\frac{x-y}{2}=3$ for any two consecutive vertices $x,y$, except the case when $\set{x,y}=\set{2,4}$. Thus the only missing edges along $P$ are $\Set{xy\in E(G(3,n))}{\Div{10}{x+y}\text{ and }x+y>10}$. Now along $P(1)$, first such occurrence is $32\leftrightarrow 38$ and that along $P(2)$ is $22\leftrightarrow 28$.\\
We denote the path $(32,26,20,14,8,2,4,10,16,22)$ in $G(3,5,n)$ by $P_0$, which we call the {\em initial path}.\footnote{Note that $\frac{x+y}{2}$ is prime for any two consecutive vertices in $P_0$. Thus the initial path $P_0$ is an induced subgraph for all prime multiple missing graphs, any intersection of them and for any near Goldbach graph.} 
 
Now we notice along $P(1)$, $(38,44,50,56,62)$ is the path next to $32$ and there is another break at the edge $62\leftrightarrow 68$. Since $\Mod{38}{8}{10}$, we have the numbers modulo $10$ along $P(1)$ after $32$ are $\set{8,4,0,6,2,8,4,0,6,2,\ldots}$. Now $8+4=12$, $4+0=4$, $0+6=6$, $6+2=8$ and $2+8=10$. Thus the sum of two consecutive numbers in the sequence is divisible by $10$ only at positions $5k$ and $5k+1$ for all $k\in\Nat$. Therefore, after $32$, $P(1)$ splits into copies of $P_5$, path of $5$ vertices, where the last part is a path of less than or equal to $5$ vertices according to the value of $n$. Also since $\Mod{28}{8}{10}$, the same pattern is followed for $P(2)$. Thus in the graph $G(3,5,n)$, the path $P$ in $G(3,n)$ splits into $P_0$, some copies of $P_5$ and at both ends two other paths of length less than or equal to $4$. For convenience, let us call all such parts, as {\em strips}. 
 
Next let us explore missing edges between vertices of $P$ and vertices in $V_1$ or $V_2$ in $G(3,5,n)$. In $G(3,n)$ every vertex in $P$ are alternatively adjacent to vertices of $V_1$ or $V_2$. Here also some edge $xy$ is missing if and only if either $\frac{x+y}{2}$ or $\frac{x-y}{2}$ is a non-trivial multiple of $5$. First we consider the initial path $P_0$. Note that it is a path of $10$ vertices $(32,26,20,14,8,2,4,10,16,22)$. 
 
Now in $G(3,n)$, vertices $\set{26,14,2,10,22}$ are adjacent to all vertices of $V_1$ and they are\\ $(6,4,2,0,2)$ modulo $10$. We have 
 $V_1=\set{12,24,36,48,60,72,84,96,108,120,\ldots}$ which is\\
 $\{2,4,6,8,0,2,4,6,8,0,\ldots\}$ under modulo $10$. So we have $5$ categories of numbers modulo $10$. Let $x\in V_1$. If $\Mod{x}{2}{10}$ or $\Mod{x}{8}{10}$, then it is adjacent to $3$ vertices $\set{26,14,10}$ of $P_0$. If $\Mod{x}{4}{10}$ or $\Mod{x}{6}{10}$, then it is adjacent to $3$ vertices $\set{2,10,22}$. If $\Mod{x}{0}{10}$, then it is adjacent to $4$ vertices $\set{26,14,2,22}$. So any vertex in $V_1$ is adjacent to at least $3$ vertices of $P_0$. Note that $12$ is also adjacent to $2,22$ and $24$ is adjacent to $14$.

In $G(3,n)$, we have vertices $\set{32,20,8,4,16}$ are adjacent to all vertices of $V_2$. These numbers are $\set{2,0,8,4,6}$ under modulo $10$. Now \\
$V_2=\set{6,18,30,42,54,66,78,90,102,114,\ldots}$ which is $\set{6,8,0,2,4,6,8,0,2,4\ldots}$ under modulo $10$. Let $x\in V_2$. Then $\Mod{x}{2}{10}$ or $\Mod{x}{8}{10}$, then it is adjacent to $3$ vertices $\set{20,4,16}$. If $\Mod{x}{4}{10}$ or $\Mod{x}{6}{10}$, then it is adjacent to $3$ vertices $\set{32,20,8}$. If $\Mod{x}{0}{10}$, then it is adjacent to $4$ vertices $\set{32,8,4,16}$. So any vertex in $V_2$ is adjacent to at least $3$ vertices of $P_0$. Note that $18$ is adjacent to $8$.

Now consider a strip $T$ isomorphic to $P_5$. It is a path of the form $(z,z+6,z+12,z+18,z+24)$ where $\Mod{z}{8}{10}$. Thus these numbers are $\set{8,4,0,6,2}$ modulo $10$. If the first member is adjacent to vertices of $V_1$ in $G(3,n)$, then $3$rd and $5$th are also so whereas $2$nd and $4$th are adjacent to vertices of $V_2$ in $G(3,n)$. Let $x\in V_1$. If $\Mod{x}{2}{10}$ or $\Mod{x}{8}{10}$, then it is adjacent to only the central vertex of $T$. If $\Mod{x}{4}{10}$ or $\Mod{x}{6}{10}$, then it is adjacent to all $1$st, $3$rd and $5$th vertices of $T$. If $\Mod{x}{0}{10}$, then it is adjacent to both end points of $T$. Let $y\in V_2$. If $\Mod{y}{2}{10}$ or $\Mod{y}{8}{10}$ or $\Mod{y}{0}{10}$, then it is adjacent to both $2$nd and $4$th vertices of $T$. If $\Mod{x}{4}{10}$ or $\Mod{x}{6}{10}$, then it is not adjacent to any vertex of $T$. On the other hand, if the first member is adjacent to vertices of $V_2$ in $G(3,n)$, then the role of vertices of $V_1$ and $V_2$ will be interchanged in the statements above (in this paragraph).  
Finally if the strip is smaller than $P_5$, then adjacencies with vertices of $V_1$ and $V_2$ are determined by cases as considered and calculated above. 
\end{proof}

\begin{table}[ht]
{\footnotesize $$\begin{array}{|c|c|c|c|c|}
\hline
 X & i & y\in P_0\cap N(x) & y\in P_5^{(1)}\cap N(x) & y\in P_5^{(2)}\cap N(x)\\
& & & (position) & (position)\\
\hline
V_1 & 2,8 & 26,14,10 & 3 & 2, 4\\
\hline
V_1 & 4,6 & 2,10,22 & 1, 3, 5 & \emptyset\\
\hline
V_1 & 0 & 26,14,2,22 & 1, 5 & 2, 4\\
\hline
V_2 & 2,8 & 20,4,16 & 2, 4 & 3 \\
\hline
V_2 & 4,6 & 32,20,8 & \emptyset & 1, 3, 5\\
\hline
V_2 & 0 & 32,8,4,16 & 2, 4 & 1, 5\\
\hline
\end{array}$$}
\caption{Adjacency of the vertex $x\in X$ where $\Mod{x}{i}{10}$ in $G(3,5,n)$ with the possible exception when $|x-y|=10$. The set $P_5^{(1)}$ ($P_5^{(2)}$) denotes the set of vertices (in increasing order) of a strip $P_5$ where the $1^\text{st}$ vertex is adjacent to vertices of $V_1$ (resp. $V_2$) in $G(3,n)$.}\label{tabv1v2}
\end{table}

Now we show connectedness and determine the diameter of graphs $G(3,5,n)$.

\begin{theorem}
The graphs $G(3,5,n)$ are connected with diameter at most $4$ for any $n\in\Nat$.
\end{theorem}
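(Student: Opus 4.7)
The plan is to split on the size of $n$. For $n \leq 12$, Theorem \ref{thmg35n} identifies $G(3,5,n)$ with $G(3,n)$, so Theorem \ref{thmg3nd} yields diameter at most $3 \leq 4$, completing this case. The substantive work is $n > 12$, handled by combining the structural decomposition of Theorem \ref{thmg35n} with the adjacency pattern in Table \ref{tabv1v2}; the few small values $13 \leq n \leq 15$, for which the full $P_{10}$ has not yet formed, can be dispatched by direct inspection of the (small) graph.

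For $n \geq 16$, I would first establish that every vertex is within distance $2$ of the initial path $P_0$: $P_0$-vertices are at distance $0$, each vertex of $V_1 \cup V_2$ is adjacent to at least three vertices of $P_0$ (by Table \ref{tabv1v2}), and each strip vertex is adjacent to some vertex of $V_1 \cup V_2$ by Theorem \ref{thmg35n}. Next, the hub $H = P_0 \cup V_1 \cup V_2$ has diameter at most $3$ in $G(3,5,n)$: two $V_1 \cup V_2$-vertices in the same partite set share a common $P_0$-neighbor (three $P_0$-neighbors out of only five in the relevant partite set forces an overlap), while a $V_1$-vertex $u$ and a $V_2$-vertex $v$ are joined by a length-$3$ path $u \to p \to q \to v$, where $(p,q)$ is an adjacent pair on $P_0$ with $p \in N(u) \cap P_0 \cap Y$ and $q \in N(v) \cap P_0 \cap X$; a direct inspection of the nine combinations of $i$-classes from Table \ref{tabv1v2} verifies that such an edge always exists on $P_0$.

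The case analysis on $(u,v)$ then proceeds as follows. If both lie in $H$, $\mathrm{dist}(u,v) \leq 3$. If $u$ is a strip vertex with hub-neighbor $u^\ast$ and $v \in H$, then $\mathrm{dist}(u,v) \leq 1 + \mathrm{dist}_H(u^\ast, v) \leq 4$. If $u, v$ lie in the same strip (of size at most $5$), their in-strip distance is already at most $4$. If $u,v$ lie in different strips and in the same partite set, their hub-neighbors $u^\ast, v^\ast$ belong to the same partite set of $V_1 \cup V_2$ and share a common $P_0$-neighbor $p$, yielding the length-$4$ path $u \to u^\ast \to p \to v^\ast \to v$.

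The main obstacle is the remaining case: $u, v$ lie in different strips and in opposite partite sets. Then $u^\ast \in V_2$ and $v^\ast \in V_1$ (or vice versa) lie in opposite partite sets of $V_1 \cup V_2$, between which the best distance is $3$, and the naive hub-routing produces a length-$5$ path. To cut this to $4$, I would route through a strip vertex rather than through $P_0$: since the two endpoint strips cannot both be single-vertex — the $n$-ranges forcing $T_L = (38)$ and $T_R = (28)$ are $\{19,20,21\}$ and $\{14,15,16\}$ respectively, which are disjoint — at least one of $u, v$, say $v$, has an in-strip neighbor $w$, and $w$ lies in the same partite set as $u$. Selecting a $V_1 \cup V_2$-neighbor $u^\ast$ of $u$ with $u^\ast \sim w$ produces the length-$3$ path $u \to u^\ast \to w \to v$. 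The existence of such a $u^\ast$ reduces to a finite check using Table \ref{tabv1v2} and the strip-adjacency pattern, exploiting that each $V_1 \cup V_2$-vertex is adjacent to many vertices across different strips. Verifying this uniformly in $n$ and across all partite configurations is the technical heart of the proof.
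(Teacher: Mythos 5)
Your high-level architecture (route everything through a hub consisting of $P_0$ and $V_1\cup V_2$, then handle strip--strip pairs separately) is close in spirit to the paper's fifteen-case analysis, and your treatment of the genuinely hard case --- two strip vertices in opposite partite sets, where naive hub-routing gives $5$ --- is sound in outline, including the correct observation that the two single-vertex end strips cannot coexist (the residues of $n$ modulo $15$ forcing each are disjoint). But there is a genuine gap at the linchpin of your argument: the claim that $H=P_0\cup V_1\cup V_2$ has diameter at most $3$. You justify it only for pairs inside $V_1\cup V_2$; for pairs involving $P_0$ it is false. For example, with $24\in V_1$ and $32\in P_0$: one checks $N(32)\cap Y=\{26\}\cup\{v\in V_2: v\equiv 0,4,6 \pmod{10}\}$ (the edge $32\leftrightarrow 38$ of $G(3,n)$ is destroyed since $\frac{32+38}{2}=35$), while $N(24)\cap Y=\{2,10,14,22\}\cup(\text{strip vertices})$, and these sets are disjoint, so $d(24,32)=4$. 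Consequently your step ``$u$ a strip vertex, $v\in H$ gives $d(u,v)\le 1+\mathrm{dist}_H(u^\ast,v)\le 4$'' only yields $5$ when $v\in P_0$ and $u^\ast$ is such a $V_1$-vertex. Repairing this forces exactly the vertex-specific routing the paper carries out in its Cases IV, VII, X, XI and XII (e.g.\ reaching $32$ via $26$ and $12$, or via $30$ and the second/fourth strip vertices), so the $P_0$-involving pairs cannot be absorbed into a clean hub lemma.

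A secondary caution: in your hard case, the ``finite check'' for a common neighbour $u^\ast$ of $u$ and $w$ cannot be completed from Table \ref{tabv1v2} alone. A residue count modulo $10$ shows that the only $V_1$-vertices generically adjacent both to a position-$1$/$5$ vertex of a $P_5^{(1)}$ strip and to a position-$2$/$4$ vertex of a $P_5^{(2)}$ strip are those congruent to $0$ modulo $10$, and the smallest such is $60$, absent for $n<30$. The configuration still resolves, but only via the exceptional edges with $|x-y|=10$ (such as $24\leftrightarrow 34$ and $36\leftrightarrow 46$), which the table's caption excludes and which the paper invokes explicitly in its Case XIV. So both the hub lemma and the finite check need the exceptional adjacencies and per-vertex arguments; as written, your proposal does not close either point.
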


\begin{proof}
By Theorem \ref{thmg3nd}, the diameter of $G(3,5,n)=G(3,n)$ is at most $3$ for $n\leq 12$. It can be observed that the diameter of $G(3,5,n)$ is $3$ for $n=13$ and is $4$ for $n=14,15$. So let us assume that $n\geq 16$. We divide the set of vertices into several parts. Let $X_1=V_1$, $X_2=V_2$, $X_3=P_0$ (the initial path), $X_4$ be the set of $5$ vertices of a strip isomorphic to $P_5$, $X_5$ be the set of vertices of a strip of length less than $4$.

{\bf Case I \& II:}\\
Let $x,y\in X_1$ or $x,y\in X_2$. From Table \ref{tabv1v2}, we have $x$ and $y$ have a common neighbor in $X_3$. Thus $d(x,y)=2$.

{\bf Case III:}\\
Let $x\in X_1$ and $y\in X_2$. Now either $x$ is adjacent to $2$ or it is adjacent to $10$ and $14$, whereas $y$ is adjacent to $4$ or $8$. In $P_0$, $2$ is adjacent to both $4$ and $8$. Also $10$ is adjacent to $4$ and $14$ is adjacent to $8$. Thus $d(x,y)= 3$. Therefore $d(x,y)\leq 3$ for all $x,y\in V_1\cup V_2$. \footnote{At this point we note that every vertex of $G(3,5,n)$ other than $V_1\cup V_2$ is adjacent to either a vertex of $V_1$ or a vertex of $V_2$ (see Table \ref{tabv1v2}). Since $d(x,y)\leq 3$ for all $x,y\in V_1\cup V_2$, we have the distance between any two vertices of the graph is at most $5$. Thus the diameter of $G(3,5,n)$ is at most $5$.}

{\bf Case IV:}\\
Let $x\in X_1$ and $y\in X_3$. Now there are $3$ options of neighbors of $x$ in $P_0$ which are $\set{26,14,10}$, $\set{2,10,22}$, $\set{26,14,2,22}$. Since $P_0=(32,26,20,14,8,2,4,10,16,22)$, $d(x,y)\leq 2$ in first and third cases. For the second case, $d(x,y)\leq 4$ unless $y=32$ or $26$. Since we have a path $(32,26,12,2,x)$, in this case also $d(x,y)\leq 4$.

{\bf Case V:}\\
Let $x\in X_1$ and $y\in X_4$. If end points of $X_4$ are adjacent to all vertices of $V_1$ in $G(3,n)$, then $x$ is adjacent to $1$st and $5$th vertices of $X_4$ or the $3$rd vertex of $X_4$ (or both). Then $d(x,y)\leq 3$. Now if end points of $X_4$ are adjacent to all vertices of $V_2$ in $G(3,n)$, then either $x$ is adjacent to both $2$nd and $4$th vertices of $X_4$ or it is adjacent to none of them. In the first case $d(x,y)\leq 2$. In the second case $x$ is $4$ or $6$ modulo $10$. Now if $y$ is $1$st, $3$rd or $5$th vertex of $X_4$, then $(y,6,8,2,x)$ is a path in $G(3,5,n)$. Then $d(x,y)\leq 4$. If $y$ is $2$nd or $4$th vertex of $X_4$, then $(y,12,2,x)$ is a path in $G(3,5,n)$. So $d(x,y)\leq 3$.

{\bf Case VI:}\\
Let $x\in X_1$ and $y\in X_5$. Let the first vertex of $X_5$ be adjacent to vertices of $V_1$ in $G(3,n)$. If $x$ is adjacent to this first vertex, then $d(x,y)\leq 4$. If $|X_5|\geq 3$ and $x$ is adjacent to the third vertex, then $d(x,y)\leq 3$. Suppose $|X_5|\leq 2$ and $x$ is not adjacent to the first vertex, say $z$. Then $(z,24,14,x)$ is a path as $x$ is $2$ or $8$ modulo $10$ and $z$ is adjacent to a vertex of $V_1$ which is $4$ or $6$ modulo $10$. Thus $d(x,y)\leq 4$ as in this case, either $y=z$ or a neighbor of $z$. Now if the first vertex of $X_5$ is adjacent to vertices of $V_2$ in $G(3,n)$, then if $|X_5|\geq 2$ and $x$ is not $4$ or $6$ modulo $10$, then $x$ is adjacent to the $2$nd vertex of $X_5$. Then $d(x,y)\leq 3$. Suppose $|X_5|\geq 2$ and $x$ is $4$ or $6$ modulo $10$. Then as in the last case of Case V, we have $d(x,y)\leq 4$. Finally let $|X_5|=1$ and $x$ is not $4$ or $6$ modulo $10$, then $(y,6,32,26,x)$ is a path in $G(3,5,n)$. So $d(x,y)\leq 4$.

{\bf Case VII:}\\
Let $x\in X_2$ and $y\in X_3$. Since $x$ has neighbors $\set{20,4,16}$ or $\set{32,20,8}$ or $\set{32,8,4,16}$ in\\
$P_0=(32,26,20,14,8,2,4,10,16,22)$ (see Table \ref{tabv1v2}), $d(x,y)\leq 4$ except $y=16,22$ when $x$ is $4$ or $6$ modulo $10$. Since we have a path $(22,16,6,8,x)$ in $G(3,5,n)$, $d(x,y)\leq 4$.

{\bf Case VIII:}\\
Let $x\in X_2$ and $y\in X_4$. This case is similar to Case V except when $x$ is $4$ or $6$ modulo $10$ and it is adjacent to none of the vertices of $X_4$. Also the first vertex of $X_4$ is adjacent to vertices of $V_1$ in $G(3,n)$. Now if $y$ is $1$st, $3$rd or $5$th vertex of $X_4$, then $(y,24,2,8,x)$ is a path in $G(3,5,n)$. If $y$ is $2$nd or $4$th vertex of $X_4$, then $(y,4,2,8,x)$ is a path in $G(3,5,n)$. So $d(x,y)\leq 4$.

{\bf Case IX:}\\
Let $x\in X_2$ and $y\in X_5$. The case is similar to Case VI where the exhibited paths $(z,24,14,x)$ and $(y,6,32,26,x)$ are to be replaced by $(z,6,20,x)$ and $(y,24,2,4,x)$.

{\bf Case X:}\\
Let $x,y\in X_3$. We have $X_3$ is a path $P_0=(32,26,20,14,8,2,4,10,16,22)$ where \\
$\set{26,14,2,10,22}\subset N(12)$. Thus every vertex or its neighbor on the path is adjacent to $12$. Thus $d(x,y)\leq 4$.

{\bf Case XI:}\\
Let $x\in X_3$ and $y\in X_4$. First suppose the first vertex of $X_4$ is adjacent to vertices of $V_1$ in $G(3,n)$. Then $1$st, $3$rd and $5$th vertices of $X_4$ are adjacent to $24$ which is also adjacent to $14,2,10,22$ of $X_3$. Thus these $3$ vertices are at most $4$ distance apart from vertices of $X_3$ except $32$. Now $32$ is adjacent to $30$ which is adjacent to $2$nd and $4$th vertices of $X_4$. Thus $32$ is at most $3$ distance apart from any vertex of $X_4$. Now $30$ is adjacent to $32,8,4,16$ of $X_3$. Thus $2$nd and $4$th vertices are at most $4$ distance apart from any vertex of $X_3$. Next we consider that the first vertex of $X_4$ is adjacent to vertices of $V_2$ in $G(3,n)$. Then $12$ is adjacent to $2$nd and $4$th vertices of $X_4$. Since $12$ is adjacent to $26,14,2,10,22$ in $X_3$, every vertex of $X_4$ is at most $4$ distance apart from any vertex of $X_3$ in this case.

{\bf Case XII:}\\
Let $x\in X_3$ and $y\in X_5$. If $|X_5|\geq 2$, then $d(x,y)\leq 4$ as in Case XI since cases are considered for $1$st, $3$rd and $2$nd vertices of $X_4$ there. Let $|X_5|=1$. If the first and only vertex of $X_5$ is adjacent to vertices of $V_1$ in $G(3,n)$, then $d(x,y)\leq 4$ as above. Let this vertex be adjacent to vertices of $V_2$ in $G(3,n)$. Then this vertex is adjacent to $30$ which is adjacent to $32,8,4,16$ of $X_3$. So any vertex of $X_3$ is at most $4$ distance apart from this one.

{\bf Case XIII:}\\
Let $x,y\in X_4$. If $x$ and $y$ are vertices of the same strip, then they are on a path of length $4$. So $d(x,y)\leq 4$. Suppose they are lying in separate strips, say $X_4$ and $X_4^\prime$.

If the first vertex of $X_4$ and the first vertex of $X_4^\prime$ are adjacent to vertices of $V_1$ in $G(3,n)$, then the $3$rd and $5$th vertices of $X_4$ are also adjacent to the same vertex as in the $1$st one if we choose, say, $24\in V_1$. Similar is the case for $X_4^\prime$. Thus $d(x,y)\leq 4$. Next if the first vertex of $X_4$ and the first vertex of $X_4^\prime$ are adjacent to vertices of $V_2$ in $G(3,n)$, then both of them and $3$rd and $5$th vertices of both strips are adjacent to $6$. So we get $d(x,y)\leq 4$.

Suppose the first vertex of $X_4$ is adjacent to vertices of $V_1$ and the first vertex of $X_4^\prime$ is adjacent to vertices of $V_2$ in $G(3,n)$. Then $30\in V_2$ is adjacent to $2$nd and $4$th vertices of $X_4$ and $1$st and $5$th vertices of $X_4^\prime$. Then $d(x,y)\leq 4$ unless $y$ is the $3$rd vertex of $X_4^\prime$. Now $18\in V_2$ is also adjacent to $2$nd and $4$th vertices of $X_4$ and the $3$rd vertex of $X_4^\prime$. Then $d(x,y)\leq 4$ when $y$ is the $3$rd vertex of $X_4^\prime$.

Finally we consider that the first vertex of $X_4$ is adjacent to vertices of $V_2$ and the first vertex of $X_4^\prime$ is adjacent to vertices of $V_1$ in $G(3,n)$. This case is same as above if we interchange the role of $X_4$ and $X_4^\prime$.

{\bf Case XIV:}\\
Let $x\in X_4$ and $y\in X_5$. This case is similar to Case XIII when first vertices of both the strip are adjacent to $V_1$ (or $V_2$) in $G(3,n)$. Suppose the first vertex of $X_4$ is adjacent to vertices of $V_1$ and the first vertex of $X_5$ is adjacent to vertices of $V_2$ in $G(3,n)$. In this case also $30$ and $18$ resolve the cases as in Case XIII. Next suppose the first vertex of $X_4$ is adjacent to vertices of $V_2$ and the first vertex of $X_5$ is adjacent to vertices of $V_1$ in $G(3,n)$. We again use $30$ and $18$ to get $d(x,y)\leq 4$ unless $|X_5|=1$. So let $|X_5|$=1.

Now first strip on one side of the path $P$ in $G(3,n)$, whose first vertex is adjacent to vertices of $V_2$ in $G(3,n)$, is $(28,34,40,46,52)$ and $(28+60k,34+60k,40+60k,46+60k,52+60k)$, ($k\in\Nat$) thereafter. On the other side of $P$, it starts with $(68,74,80,86,92)$ and $(68+60k,74+60k,80+60k,86+60k,92+60k)$, ($k\in\Nat$) thereafter. It is interesting to note that for each such first set of strips, $2$nd and $4$th vertices are adjacent to vertices in $V_1$ whose values are $10$ less than them respectively. For example, $34\leftrightarrow 24$, $46\leftrightarrow 36$, $94\leftrightarrow 84$, $106\leftrightarrow 96$ etc. Again for each such second set of strips, $2$nd and $4$th vertices are adjacent to vertices in $V_1$ whose values are $10$ more than them respectively. For example, $74\leftrightarrow 84$, $86\leftrightarrow 96$, $134\leftrightarrow 144$, $146\leftrightarrow 156$ etc.

Thus there is a vertex in $V_1$ which is $4$ modulo $10$ (resp. $6$ modulo $10$) but adjacent to the $2$nd vertex (resp. $4$th vertex) of $X_4$ and the only vertex of $X_5$. Thus we have $d(x,y)\leq 3$ in this case.

{\bf Case XV:}\\
Let $x,y\in X_5$. If $x$ and $y$ are in the same strip, then both vertices are on a path of length less than $4$, $d(x,y)< 4$. Suppose they are lying in separate strips, say $X_5$ and $X_5^\prime$. If at least one of $|X_5|$ or $|X_5^\prime|$ is greater than $1$, then we can show $d(x,y)\leq 4$ as in Case XIII or Case XIV. We note that there cannot be two broken strips of one vertex.

As we noted above, strips are started in one side of $P$ with $(28,34,40,46,52)$ and $(28+30k,34+30k,40+30k,46+30k,52+30k)$, ($k\in\Nat$) thereafter, whereas in other side with $(38,44,50,56,62)$ and $(38+30k,44+30k,50+30k,56+30k,62+30k)$, ($k\in\Nat$) thereafter. So at the starting of strips with $38,68,98,\ldots$, the other side last strip contains $2$ or $3$ vertices before the appearance of the $2$nd vertex of this strip. Again at the starting of strips with $58,88,118,\ldots$, the other side last strip contains $4$ or $5$ vertices before the appearance of the $2$nd vertex of this strip. This proves our claim that the case $|X_5|=|X_5^\prime|=1$ does not arise.

We covered all the cases. Thus the graphs $G(3,5,n)$ are connected with diameter at most $4$.
\end{proof}

The above proof shows that graphs $G(3,5,n)$ is far more complicated than graphs $G(3,n)$ or $G(5,n)$. Now we show the Hamiltonian property of graphs $G(3,5,n)$. The complete proof is long and involving. Below we sketch the main idea of the proof.

\begin{theorem}
The graphs $G(3,5,n)$ have Hamiltonian paths for all $n$ and they are Hamiltonian for all even $n>2$.
\end{theorem}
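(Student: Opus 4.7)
The plan is to construct Hamiltonian cycles explicitly for even $n>2$ and derive Hamiltonian paths for odd $n$ as a corollary. For $n\leq 12$, by Theorem \ref{thmg35n} we have $G(3,5,n)=G(3,n)$, so the result follows from Theorem \ref{hamg3n}. For odd $n\geq 13$, one checks that $G(3,5,n-1)$ is an induced subgraph of $G(3,5,n)$ (the argument of Proposition \ref{thmgind} adapts routinely to intersections of prime multiple missing graphs), so dropping the vertex $2n$ from a Hamiltonian cycle of $G(3,5,n)$ yields a Hamiltonian path of $G(3,5,n-1)$. Thus it suffices to construct a Hamiltonian cycle whenever $n$ is even and $n\geq 14$.

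For such $n$, I would exploit the structural decomposition of Theorem \ref{thmg35n}: the graph consists of the initial path $P_0=(32,26,20,14,8,2,4,10,16,22)$, a collection of $P_5$-strips on each side of $P_0$ (with one or two shorter end-strips depending on $n$), and the independent set $V_1\cup V_2$. The cycle is built by traversing each strip (almost) as a subpath and using vertices of $V_1\cup V_2$ as \emph{bridges} between consecutive strips and within $P_0$. Since each $P_5$-strip is a path whose two endpoints lie in the same partite set, each strip consumes exactly two bridges drawn from $V_1$ or $V_2$, chosen according to the adjacency constraints encoded in Table \ref{tabv1v2}.

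The construction would proceed by starting at a high-indexed vertex on one side of $P_0$, zig-zagging inward through the $V_{31}$-side strips (each one linked to the next by a suitable $V_1$ or $V_2$ bridge), transitioning into $P_0$, traversing $P_0$ with additional $V_1, V_2$ vertices inserted at points where Table \ref{tabv1v2} permits a detour, then exiting on the $V_{32}$ side, zig-zagging outward through its strips, and finally closing the cycle back to the starting vertex. Because $|V_1\cup V_2|$ substantially exceeds the number of inter-strip transitions, several bridge vertices must be inserted inside $P_0$ itself, exploiting the fact that central vertices of $P_0$ have many neighbors in $V_1\cup V_2$ (for instance, $12$ is adjacent to $2$ and $22$, and $24$ is adjacent to $14$). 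The pattern of strip-breaks and of residues of $V_1, V_2$ modulo $10$ is eventually periodic in $n$, so the construction splits into a finite number of cases according to $n$ modulo a fixed period, with a uniform rule for extending from one period to the next.

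The main obstacle is the bookkeeping. One must simultaneously ensure (i) that every vertex of $V_1\cup V_2$ is used exactly once; (ii) that every bridge satisfies the adjacency constraints of Table \ref{tabv1v2}, including the $\pm 10$ exceptions not covered by the generic rules; and (iii) that the bipartite balance of the cycle is maintained, which is precisely why Hamiltonian cycles can only exist for even $n$. The most delicate cases are those in which an end-strip has length strictly less than $5$, because there the flexibility of bridge placement is reduced; handling these is what forces the explicit case analysis on $n$ modulo the period, and this is where the heart of the technical work lies.
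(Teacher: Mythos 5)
Your overall strategy --- reduce to even $n$, handle $n\le 12$ via $G(3,5,n)=G(3,n)$, and then build Hamiltonian cycles from the strip decomposition of Theorem \ref{thmg35n} using vertices of $V_1\cup V_2$ as insertions, with a periodic extension rule in $n$ --- is the same outline the paper follows. But as written the proposal has a genuine gap, and in fact a quantitative obstruction. You propose to place the vertices of $V_1\cup V_2$ ``between consecutive strips and within $P_0$.'' Since $V_1\cup V_2$ is an independent set, no two of its vertices can be consecutive on the cycle, so each gap between two path-pieces and each broken edge of $P_0$ absorbs at most one such vertex. For $n=6k$ there are $2k$ vertices in $V_1\cup V_2$, but only about $(4k-10)/5$ strips (hence about that many inter-strip gaps) plus at most $9$ internal positions in $P_0$: roughly $0.8k+10$ slots for $2k$ vertices, which fails already around $n\ge 54$. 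The construction is forced to thread independent-set vertices \emph{inside} nearly every $P_5$-strip (roughly one after every two strip vertices), which is exactly what the paper's explicit sequence $14,20,\mathbf{V},16,22,\mathbf{U},26,32,\mathbf{V},28,\mathbf{V},40,34,\mathbf{U},38,\ldots$ does; your ``(almost) as a subpath'' hedge does not supply this, and making it precise is where all the difficulty lives, since each insertion must respect the residue-mod-$10$ constraints of Table \ref{tabv1v2} together with the $\pm 10$ exceptions.

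Beyond that, the proposal defers the entire verification: no base cycles are exhibited, and the ``uniform rule for extending from one period to the next'' is asserted rather than defined or proved. The paper's proof consists precisely of (i) explicit cycles for $14\le n\le 74$, (ii) a concrete splitting of each cycle into segments $P_1,P_2,P_3,P_4$ with the replacement $P_3\mapsto (P_2,P_3)+60$, and (iii) a pointwise check that adding $60$ preserves partite classes, residues modulo $10$, and hence all required adjacencies while introducing exactly the $30$ new vertices without repetition. None of these steps is routine bookkeeping that can be waved through; until the dense insertion pattern and the shift-invariance argument are actually carried out, the statement is not proved.
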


\begin{proof} We first note that $G(3,5,n)=G(3,n)$ for all $n\leq 12$. So the result follows from Theorem \ref{hamg3n} for $n\leq 12$. Also if $G(3,5,2n)$ is Hamiltonian, $G(3,5,2n-1)$ has a Hamiltonian path.

By Theorem \ref{thmg35n}, we see the graph $G(3,5,n)$ has a path of length $10$, several other paths of length $5$ and one or two paths (as the case may be) of length less than $5$. Following Figure \ref{hamg35} we observe a pattern of determining Hamiltonian cycle (arrowhead lines are edges of the Hamiltonian cycle and dotted lines are some other edges in the paths of the graph). There are many other patterns, we choose one which is convenient. An arrangement of vertices such that they form a spanning cycle as we did for $G(3,n)$ in Theorem \ref{hamg3n} is a matter of time and patience for some particular small $n$. But we require a pattern that can be repeated to get larger cycles. The most important and difficult part is that, in $G(3,n)$ all vertices in the path were adjacent to all vertices of independent sets, $V_1$ or $V_2$ (see Theorem \ref{hamg3n}) according to their belonging in partite sets. But here it is not true. Even if a vertex in a path and a vertex in $V_i$, $(i=1$ or $2)$ belong to opposite partite sets, they are not adjacent if sum or difference is a non-trivial multiple of $10$. Thus replacing symbols $\mathbf{U}$ or $\mathbf{V}$ by a number is not easy in this case, which was arbitrary in the case of $G(3,n)$. However we will show an assignment of such numbers is always possible.

Recall that\\ $V_1=\Set{x\in V}{\Mod{x}{0}{12}}$ $=\set{12,24,36,48,\ldots}$, and \\
$V_2=\Set{x\in V}{\Mod{x}{6}{12}}$ $=\set{6,18,30,36,\ldots}$.\\
As before we use $\mathbf{U}$ and $\mathbf{V}$ to represent and an element of $V_1$ and $V_2$ respectively. 

\begin{figure}[ht]
\begin{center}
\includegraphics[scale=0.7]{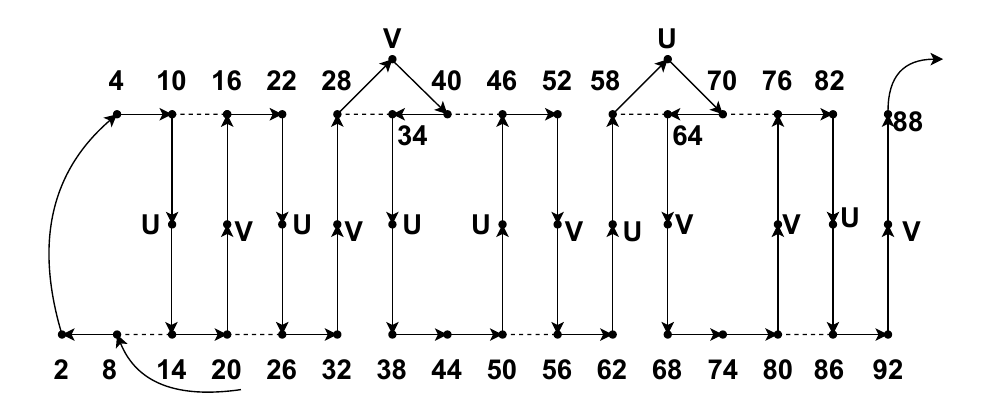}
\caption{The pattern of Hamiltonian cycle in $G(3,5,n)$}\label{hamg35}
\end{center}
\end{figure}

First we consider the following sequence:
{\footnotesize $$\begin{array}{l}
4,10,\mathbf{U}, \\
14,20,\mathbf{V},16,22,\mathbf{U},26,32,\mathbf{V},28,\mathbf{V},40,34,\mathbf{U},38,\\
 44,50,\mathbf{U},46,52,\mathbf{V},56,62,\mathbf{U},58,\mathbf{U},70,64,\mathbf{V},68\\
74,80,\mathbf{V},76,82,\mathbf{U},86,92,\mathbf{V},88,\mathbf{V},100,94,\mathbf{U},98\\
 104,110,\mathbf{U},106,112,\mathbf{V},116,122,\mathbf{U},118,\mathbf{U},130,124,\mathbf{V},128\\
\ldots \\
\mathbf{V},8,2,4. 
\end{array}$$}
One can note that the difference between the corresponding numbers in consecutive rows of the above sequence is $30$. Following this rule we can extend the sequence as large as we want. Since the appearance of members of $V_1$ and $V_2$ will follow (\ref{eqap}), each $\mathbf{U}$ and $\mathbf{V}$ can be replaced by suitable members of $V_1$ and $V_2$. 

We provide Hamiltonian cycles of $G(3,5,n)$ for $14\leq n\leq 44$ in Table \ref{thc35n444} and for $46\leq n\leq 74$ in Table \ref{thc35n4674} by using the above sequence,  the pattern in Figure \ref{hamg35} and its various modifications as required. Then we note that the pattern will repeat for $n=m+30k$ for $46\leq m\leq 74$ and any $k\in\Nat$. We split the cycles into $4$ paths, say, $P_1,P_2,P_3$ and $P_4$ as in Table \ref{tp1234}.

\begin{table}
{\footnotesize $$\begin{array}{|c|c|c|c|c|}
\hline
n & P_1 & P_2 & P_3 & P_4\\
\hline
46 & (4,10,24) & \text{next } 30 \text{ numbers} & \text{next } n-35 \text{ numbers} & (8,2,4)\\
48,72 & (4,10,12) & \text{next } 30 \text{ numbers} & \text{next } n-36 \text{ numbers} & (2,8,18,4)\\ 
50 & (4,10,16,22,12) & \text{next } 30 \text{ numbers} & \text{next } n-40 \text{ numbers} & (6,20,14,8,2,4)\\ 
52,54 & (4,10,12)  & \text{next } 30 \text{ numbers} & \text{next } n-36 \text{ numbers} & (18,8,2,4)\\ 
56 & (4,10,12)  & \text{next } 30 \text{ numbers} & \text{next } n-36 \text{ numbers} & (6,8,2,4)\\ 
58,62,64,66,68,70 & (4,10,12) & \text{next } 30 \text{ numbers} & \text{next } n-35 \text{ numbers} & (8,2,4)\\ 
60,74 & (4,10,12) & \text{next } 30 \text{ numbers} & \text{next } n-36 \text{ numbers} & (2,8,6,4)\\
\hline
\end{array}$$}
\caption{$P_1,P_2,P_3,P_4$ for $46\leq n\leq 74$.}\label{tp1234} 
\end{table}

Now we can easily construct the Hamiltonian cycle of $G(3,5,m+30)$ from the Hamiltonian cycle of $G(3,5,m)$ in the following way. If the Hamiltonian cycle of $G(3,5,m)$ is $(P_1,P_2,P_3,P_4)$, then the same for $G(3,5,m+30)$ is $(P_1,P_2,P_3^\prime,P_4)$, where $P_3^\prime$ is obtained by adding $60$ to the sequence $(P_2,P_3)$.

For example, the Hamiltonian cycle of $G(3,5,48)$ is\\
$C_{48}=(4, 10, 12,\ 14, 20, 78, 16, 22, 36, 26, 32, 30, 28, 6, 40, 34, 24, 38, 44, 50, 72, 46, 52, 42, 56, 62, 60,$\\ 
$58, 48, 70, 64, 54, 68, 74, 80, 66, 76, 82, 96, 86, 92, 90, 88, 94, 84, 2, 8, 18, 4)$\\
with $P_1=(4,10,12)$, $P_2=(14, 20, 78, 16, 22, 36, 26, 32, 30, 28, 18, 40, 34, 24, 38, 44, 50, 72, 46, 52, 42,$\\
$56, 62, 60, 58, 48, 70, 64, 54, 68)$, $P_3=(74, 80, 66, 76, 82, 96, 86, 92, 90, 88, 94, 84)$, and\\
 $P_4=(2, 8, 18, 4)$. Then we compute $P_3^\prime=(14, 20, 78, 16, 22, 36, 26, 32, 30, 28, 18, 40, 34, 24, 38, 44, 50,$\\
$ 72, 46, 52, 42, 56, 62, 60, 58, 48, 70, 64, 54, 68,\ 74, 80, 66, 76, 82, 96, 86, 92, 90, 88, 94, 84) + 60 =$\\
$(74, 80, 138, 76, 82, 96, 86, 92, 90, 88, 78, 100, 94, 84, 98, 104, 110, 132, 106, 112, 102, 116, 122, 120, 118, $\\
$ 108, 130, 124, 114, 128, 134, 140, 126, 136, 142, 156, 146, 152, 150, 148, 154, 144)$.\\
 We claim that the Hamiltonian cycle of $G(3,5,78)$ is $(P_1,P_2,P_3^\prime,P_4)$, i.e.,\\
$(4,10,12,\ 14, 20, 78, 16, 22, 36, 26, 32, 30, 28, 18, 40, 34, 24,38, 44, 50, 72, 46, 52, 42, 56, 62, 60, $\\
$58, 48, 70, 64, 54, 68,\ 74, 80, 138, 76, 82, 96, 86, 92, 90, 88, 78, 100, 94, 84, 98, 104, 110, 132, 106,$\\
$ 112, 102, 116, 122, 120, 118, 108, 130, 124, 114, 128, 134, 140, 126, 136, 142, 156, 146, 152, 150, 148,$\\
$ 154, 144,\ 2, 8, 18, 4)$.

Let us explain the reason for the claim. 

\begin{enumerate}
\item Here $C_{48}$ is a Hamiltonian cycle of $G(3,5,48)$. So it contains all even positive integers from $2$ to $96$ and $P_1,P_2,P_3,P_4$ contain all of them and they appear only once, expect the first and the last number, $4$. We keep $P_1,P_2,P_4$. Only $P_3$ is changed to $(P_2,P_3)+60$.
\item The numbers in $P_1$ or $P_4$ are not appearing in $P_2$ or $P_3^\prime$. Also all numbers $y+60$, where $y$ is either in $P_1$ or $P_4$ are in $P_2$ and we are keeping $P_2$. So these numbers are in the new sequence. Here these numbers are $\set{64,70,72,62,68,78}$. 
\item If the number $x$ is in $P_2$, then $x+60$ is not in $P_2$. So there is no repetition of members of $P_2$ by the construction of $P_3^\prime$. 
\item If the number $z$ is in $P_3$, then $z-60$ is in $P_2$. Thus all these numbers $z$ are in $P_3^\prime$ as we add $60$ to members of $P_2$.
\item The path $P_2$ contains $30$ numbers and we need exactly $30$ new even integers in the Hamiltonian cycle of $G(3,5,78)$. Since numbers of $P_3$ are also added by $60$, they are different from those in $P_2$. As we noted in Column 4 that all numbers of $P_3$ are in $P_3^\prime$. So we get exactly $|P_2-P_3|+|P_3|=|P_2|=30$ new even integers. Since $P_2\cup P_3$ contains distinct even numbers less than or equal to $96$, we have these new numbers are also distinct and less than or equal to $96+60=156=78\times 2$ in $P_3^\prime$. 
\item The most important point is that the assignments for $\mathbf{U}$ or $\mathbf{V}$ by members of $V_1$ or $V_2$ (respectively) remain proper by adding $60$ to elements of $P_2$ and $P_3$ as these numbers were properly assigned for $m=48$ and now these numbers as well as numbers before and after them, all are added by $60$. So they will be in the same congruence classes modulo $10$ as they were before. Moreover, adjacencies between vertices repeat exactly in the same way at the interval of $60$ as the first given sequence shows. The sequence is repeating by $30$ but adding $60$ will keep the numbers in the same partite set as it is a multiple of $4$. So there will be no chance of newly creating non-trivial multiple of $10$ for adding or subtracting them with their before and after numbers. Also if the difference of any two of them was exactly $10$, it is still $10$ after adding. For example, $36$ was between $22$ and $26$ in the cycle of $G(3,5,48)$. Now $96$ is in between $82$ and $86$ in the $P_3^\prime$ part of the cycle of $G(3,5,78)$.
\end{enumerate} 

Thus we have all even numbers between $2$ and $156$ appear in $(P_1,P_2,P_3^\prime,P_4)$ and each number appears only once before the last number $4$ which is the first number of the cycle. Hence this sequence is a Hamiltonian cycle of $G(3,5,78)$. We can continue the process further by the same method.

If we take $m=78$, then $P_1=(4,10,12)$,  $P_2=(14, 20, 78, 16, 22, 36, 26, 32, 30, 28, 18, 40,$\\
$ 34, 24, 38, 44, 50, 72, 46, 52, 42, 56, 62, 60, 58, 48, 70, 64, 54, 68)$, $P_3=(74, 80, 138, 76, 82, 96, 86, 92, 90,$\\
 $88, 78, 100, 94, 84, 98, 104, 110, 132, 106, 112, 102, 116, 122, 120, 118, 108, 130, 124, 114, 128, 134, 140, $\\
$ 126, 136, 142, 156, 146, 152, 150, 148, 154, 144)$ and $P_4=(2, 8, 18, 4)$.\\
 Thus we compute (new) $P_3^\prime=(14, 20, 78, 16, 22, 36, 26, 32, 30, 28, 18, 40, 34, 24, 38, 44, 50, 72, 46, 52,$\\
$ 42, 56, 62, 60, 58, 48, 70, 64, 54, 68,\ 74, 80, 138, 76, 82, 96, 86, 92, 90, 88, 78, 100, 94, 84, 98, 104, 110,$\\
$ 132, 106, 112, 102, 116, 122, 120, 118, 108, 130, 124, 114, 128, 134, 140, 126, 136, 142, 156, 146, 152, 150,$\\
$ 148, 154, 144)+60 = (74, 80, 138, 76, 82, 96, 86, 92, 90, 88, 78, 100, 94, 84, 98, 104, 110, 132, 106, 112,$\\
$ 102, 116, 122, 120, 118, 108, 130, 124, 114, 128, 134, 140, 198, 136, 142, 156, 146, 152, 150, 148, 138, 160,$ \\
$  154, 144,158, 164, 170, 192, 166, 172, 162, 176, 182, 180, 178, 168, 190, 184, 174, 188, 194, 200, 186, 196,$\\
$  202, 216, 206, 212, 210, 208, 214, 204)$.
Then the Hamiltonian cycle of $G(3,5,108)$ is given by:\\
$(4,10,12,\ 14, 20, 78, 16, 22, 36, 26, 32, 30, 28, 18, 40, 34, 24, 38, 44, 50, 72, 46, 52, 42, 56, 62, 60,$\\
$ 58, 48, 70, 64, 54, 68,\ 74, 80, 138, 76, 82, 96, 86, 92, 90, 88, 78, 100, 94, 84, 98, 104, 110, 132, 106, $\\
$112, 102, 116, 122, 120, 118, 108, 130, 124, 114, 128, 134, 140, 198, 136, 142, 156, 146, 152, 150,  $\\
$148, 138, 160, 154, 144,158, 164, 170, 192, 166, 172, 162, 176, 182, 180, 178, 168, 190, 184, 174, 188,  $\\
$194, 200, 186, 196, 202, 216, 206,212, 210, 208, 214, 204,\ 2, 8, 18, 4)$.

One may verify that similar arguments work for all the cases described in Table \ref{tp1234}. Thus this construction provides Hamiltonian cycles of $G(3,5,n)$ for all $n\geq 76$. Since Tables \ref{thc35n444} and \ref{thc35n4674} exhibit Hamiltonian cycles for all $4\leq n\leq 74$, we have $G(3,5,n)$ is Hamiltonian for all even $n>2$.
\end{proof}

\section{General graphs $G(p,n)$, their intersections and conclusion}

In \S \ref{ssstr} to \S \ref{secg5n} we have seen interesting structures and several properties of graphs $G(3,n)$ and $G(5,n)$. In general, graphs $G(p,n)$ also follow similar structure and nice properties. Let $p$ be an odd prime. The graph $G(p,n)=(V,E)$ is a bipartite graph with partite sets $X$ and $Y$ (see Remark \ref{rem1}). The graph has an independent set $V_1\cup V_2=\Set{z\in V}{\Mod{z}{0}{p}}$, where $V_1\subseteq X$ and $V_2\subseteq Y$. Let $\eta(p)$ be the number of unordered additive partitions of $p$ by positive integers into $2$ parts. For example, $\eta(3)=1$ as $3=1+2$, $\eta(5)=2$ as $5=1+4=2+3$ etc. Then the graph has $\eta(p)=\left\lfloor \frac{p}{2}\right\rfloor$ number of (disjoint) paths. Every vertex in each path is adjacent to all members of $V_1$ or $V_2$ as well as all members of other paths according to their belonging in opposite partite sets. Following the proofs of \S \ref{secg3np} and \S \ref{secg5n}, one can show that the graphs $G(p,n)$ is connected with diameter at most $3$. They have Hamiltonian path for any $n$ and they are Hamiltonian for all even $n>2$.

The intersection of graphs $G(p,n)$ for more than one prime are much more complicated as we have seen in \S \ref{secg35n}. We are very much interested to study these intersections as any (finite) near Goldbach graph is the intersection of finite numbers of prime multiple missing graphs. We know that the connectedness of all (finite) near Goldbach graphs is equivalent to the statement that every even integer is sum of two members of $\mathcal{P}$, where $\mathcal{P}$ is the set of odd primes and $1$. In this paper, we proved connectedness of any graph $G(p,n)$ and the intersection graph $G(3,5,n)$. Continuing this study will explore the structures of intersections of graphs with larger number of primes, from which we may find an insight to solve the main problem in future. One surprising observation we made that both Goldbach graphs and near Goldbach graphs are connected with diameter at most $5$ up to $10000$ vertices using machine programming. This indicates a stronger result than connectedness:

\begin{conjecture}
The diameters of (finite) Goldbach graphs and (finite) near Goldbach graphs are bounded by a constant. 
\end{conjecture}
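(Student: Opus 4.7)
The plan is to deduce the bound from a \emph{hub argument}. One fixes a small set $H = \{h_1, \ldots, h_k\}$ of low-valued even vertices (say with $h_i \leq 60$) which induce a tightly connected subgraph inside every $G(n)$ for $n$ large enough. Since $H$ consists of small even numbers, Proposition \ref{thmgind} guarantees that its induced structure inside $G(n)$ stabilizes once $n$ exceeds some threshold, so $\mathrm{diam}_{G(n)}(H) \leq D_1$ for an absolute constant $D_1$. A natural candidate is to take $H$ to contain (an extension of) the initial path $P_0 = (32,26,20,14,8,2,4,10,16,22)$ already identified in Section \ref{secg35n}, since by the footnote there $P_0$ is an induced subgraph of every near Goldbach graph, and has diameter $9$ inside itself but much smaller diameter after adding a few more small hub vertices like $12, 24, 36, 6, 18, 30$.

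The crux is then a \emph{covering lemma}: every vertex $v$ of $G(n)$ lies within graph distance at most $D_0$ of some $h_i$, for an absolute constant $D_0$. Combining this with the $H$-diameter bound gives $\mathrm{diam}(G(n)) \leq 2D_0 + D_1$. To establish the covering lemma, I would count, for each $v$, the number of $h_i$ (and of length-2 extensions $h_i \to w \to v$) such that the required sum/difference quantities are simultaneously odd primes. The Hardy-Littlewood heuristic predicts this count grows with $n$, and sieve lower bounds of Chen type provide partial unconditional control. For vertices where direct adjacency to $H$ fails, one would iterate: pick a neighbor $v'$ of $v$ (whose existence can be guaranteed by the structural theorems of Sections \ref{ssstr}-\ref{secg5n} applied inside the intersections that make up $G(n)$) and argue $v'$ lies strictly closer to $H$, producing a finite descent.

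The main obstacle is that the conjecture \emph{implies} Goldbach's conjecture: a bounded-diameter graph is in particular connected, and by the equivalence established in \cite{DGGS}, connectedness of all finite Goldbach graphs is equivalent to Goldbach. Therefore any unconditional proof would be a genuine breakthrough. A realistic plan is either (i) a conditional argument under GRH (or the generalized Elliott-Halberstam conjecture), in which Hardy-Littlewood-type asymptotics for the number of prime pairs $(p,q)$ with $p+q = 2m$ yield the covering lemma uniformly in $v$; or (ii) an unconditional statement of the form ``$\mathrm{diam}(G(n)) \leq D$ for all $n$ outside a density-zero exceptional set of vertices,'' obtained by combining Montgomery-Vaughan exceptional-set estimates with the explicit structural analysis of $G(3,5,n)$ from Section \ref{secg35n}. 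In either direction, a complete proof would be a major advance beyond the current state of the art on Goldbach's problem, which is why the statement is posed here as a conjecture rather than a theorem.
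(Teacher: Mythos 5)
The statement you are addressing is posed in the paper as a \emph{conjecture}, not a theorem: the authors offer no proof, only the computational observation that the diameters of $G_n$ and $G(n)$ stay at most $5$ up to $10000$ vertices, together with the proved diameter bounds $3$ for $G(p,n)$ and $4$ for $G(3,5,n)$ as circumstantial evidence. So there is no proof in the paper to compare yours against, and your proposal is likewise not a proof but a research plan. You correctly identify the decisive obstruction: bounded diameter implies connectedness, and by the equivalence of \cite{DGGS} the connectedness of all finite Goldbach graphs is equivalent to Goldbach's conjecture, so no argument of the kind you sketch can be completed unconditionally. Your candid framing of the proposal as conditional or exceptional-set-only is therefore the right conclusion, and it matches the paper's decision to leave the statement as a conjecture.

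That said, two steps in your sketch are weaker than you present them. First, the covering lemma is not a reduction of the problem but a relocation of all of its difficulty: ``every vertex lies within distance $D_0$ of a fixed hub set $H$'' is itself a Goldbach-strength statement (for $D_0=1$ it asserts that $m\pm h/2$ are simultaneously prime for some bounded $h$, which for a \emph{fixed finite} $H$ fails for almost all $m$ by a standard $1/\log^2 m$ density count --- the count of adjacent hubs does \emph{not} grow with $n$; only the count of length-$2$ extensions does, and lower-bounding that uniformly in $v$ is exactly a binary-Goldbach-with-constraints problem). Second, the ``finite descent'' step --- pick a neighbor $v'$ strictly closer to $H$ --- is circular as a route to a \emph{bounded} diameter: descent terminating does not bound the number of steps by a constant independent of $n$ unless you already have a uniform quantitative gain per step, which is again the content of the conjecture. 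The honest summary is that your hub strategy is a reasonable heuristic explanation of \emph{why} the observed diameters are small (consistent with the paper's Tables and with the structure theorems for $G(3,n)$, $G(5,n)$, $G(3,5,n)$), but it does not advance the statement beyond conjecture, which is precisely where the paper leaves it.
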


In this paper, we show that diameters of graphs $G(p,n)$ are at most $3$ and that of graphs $G(3,5,n)$ is at most $4$. Further study and big data analysis on intersections of graphs $G(p,n)$ may answer it. In \cite{DGGS}, it was observed that near Goldbach graphs $G(n)$ are Hamiltonian for small even $n>2$ and a Hamiltonian path is given for $G(500)$. Here we proved that graphs $G(3,n)$ and $G(5,n)$ and their intersection are Hamiltonian for even $n>2$ which implies the existence of Hamiltonian paths for odd $n$. The big question is that how far this property is being carried over for larger intersections. Again the big data analysis may help us to know the truth. 
Finally we believe that the structures and patterns of these graphs will unfold many interesting properties and knowledge about various parameters other than those covered in this paper.

\newpage

\section{Appendix}

\begin{table}[h]
$$\begin{array}{|ll|}
\hline
n=4: & (2,8,6,4,2)\\
n=6: & (2,8,6,4,10,12,2)\\
n=8: & (2,8,14,12,10,16,6,4,2)\\
n=10: & (2,8,6,20,14,12,10,16,18,4,2)\\
n=12: & (2,8,6,20,14,12,22,16,18,4,10,24,2)\\
n=14: & (2,8,14,12,26,20,6,28,22,24,10,16,18,4,2)\\
n=16: & (2,8,6,20,14,12,26,32,18,28,22,24,10,16,30,4,2)\\
n=18: & (2,8,6,20,14,12,26,32,18,28,34,24,22,16,30,4,10,36,2)\\
n=20: & (2,8,14,12,26,20,6,32,38,24,34,40,18,28,22,36,10,16,30,4,2)\\
n=22: & (2,8,6,20,14,12,26,32,18,44,38,24,34,40,30,28,22,36,10,16,42,4,2)\\
n=24: & (2,8,6,20,14,12,26,32,18,44,38,24,46,40,30,28,34,36,22,16,42,4,10,48,2)\\
n=26: & (2,8,14,12,26,20,6,32,38,24,50,44,18,52,46,36,34,40,30,28,22,48,10,16,\\
 & \null\hfill 42,4,2)\\
n=28: & (2,8,6,20,14,12,26,32,18,44,38,24,50,56,30,52,46,36,34,40,42,28,22,48,\\
 & \null\hfill 10,16,54,4,2)\\
n=30: & (2,8,6,20,14,12,26,32,18,44,38,24,50,56,30,52,58,36,46,40,42,28,34,48,\\
& \null\hfill 22,16,54,4,10,60,2)\\
n=32: & (2,8,14,12,26,20,6,32,38,24,50,44,18,56,62,36,58,64,30,52,46,48,34,40,\\
& \null\hfill 42,28,22,60,10,16,54,4,2)\\
n=34: & (2,8,6,20,14,12,26,32,18,44,38,24,50,56,30,68,62,36,58,64,42,52,46,48,\\
& \null\hfill 34,40,54,28,22,60,10,16,66,4,2)\\
n=36: & (2,8,6,20,14,12,26,32,18,44,38,24,50,56,30,68,62,36,70,64,42,52,58,48,\\
& \null\hfill 46,40,54,28,34,60,22,16,66,4,10,72,2)\\
n=38: & (2,8,14,12,26,20,6,32,38,24,50,44,18,56,62,36,74,68,30,76,70,48,58,64,\\
& \null\hfill 42,52,46,60,34,40,54,28,22,72,10,16,66,4,2)\\
n=40: & (2,8,6,20,14,12,26,32,18,44,38,24,50,56,30,68,62,36,74,80,42,76,70,48,\\
 & \null\hfill 58,64,54,52,46,60,34,40,66,28,22,72,10,16,78,4,2)\\
\hline
\end{array}$$
\caption{Hamiltonian cycles of $G(3,n)$ for $4\leq n\leq 40$}\label{thamcycleg3n}
\end{table}

\vspace{1em}

\begin{table}
$$\begin{array}{|ll|}
\hline
n=4: & (2,4,6,8,10)\\
n=6: & (2,4,6,8,10,12,2)\\[1em]
n=8: & (2,4,6,8,14,12,10,16,2)\\
n=10: & (2,4,6,8,10,12,14,20,18,16,2)\\
n=12: & (2,4,6,8,10,12,14,24,22,20,18,16,2)\\
n=14: & (2,4,6,8,10,12,14,24,22,20,18,28,26,16,2)\\
n=16: & (2,4,6,8,10,12,14,24,22,32,30,28,26,16,18,20,2)\\[1em]
n=18: & (2,4,6,8,10,12,14,\ 24,22,20,18,16,\ 26,28,34,32,30,36,2)\\
n=20: & (2,4,6,8,10,12,14,\ 24,22,20,18,16,\ 26,28,30,32,34,\ 40,38,36,2)\\
n=22: & (2,4,6,8,10,12,14,\ 24,22,20,18,16,\ 26,28,30,32,34,\ 44,42,40,38,36,2)\\
n=24: & (2,4,6,8,10,12,14,\ 24,22,20,18,16,\ 26,28,30,32,34,\ 44,42,40,38,48,46,\\
& \null\hfill 36,2)\\
n=26: & (2,4,6,8,10,12,14,\ 24,22,20,18,16,\ 26,28,30,32,34,\ 44,42,52,50,48,46,\\
& \null\hfill 36,38,40,2)\\[1em]
n=28: & (2,4,6,8,10,12,14,\ 24,22,20,18,16,\ 26,28,30,32,34,\ 44,42,40,38,36,\ \\
& \null\hfill 46,48,54,52,50,56,2)\\
n=30: & (2,4,6,8,10,12,14,\ 24,22,20,18,16,\ 26,28,30,32,34,\ 44,42,40,38,36,\ \\
& \null\hfill 46,48,50,52,54,\ 60,58,56,2)\\
n=32: & (2,4,6,8,10,12,14,\ 24,22,20,18,16,\ 26,28,30,32,34,\ 44,42,40,38,36,\ \\
& \null\hfill 46,48,50,52,54,\ 64,62,60,58,56,2)\\
n=34: & (2,4,6,8,10,12,14,\ 24,22,20,18,16,\ 26,28,30,32,34,\ 44,42,40,38,36,\ \\
& \null\hfill 46,48,50,52,54,\ 64,62,60,58,68,66,56,2)\\
n=36: & (2,4,6,8,10,12,14,\ 24,22,20,18,16,\ 26,28,30,32,34,\ 44,42,40,38,36,\ \\
&\null\hfill 46,48,50,52,54,\ 64,62,72,70,68,66,56,58,60,2)\\[1em]
n=38: & (2,4,6,8,10,12,14,\ 24,22,20,18,16,\ 26,28,30,32,34,\ 44,42,40,38,36,\ \\
&\null\hfill 46,48,50,52,54,\ 64,62,60,58,56,\ 66,68,74,72,70,76,2)\\
\hline
\end{array}$$
\caption{Hamiltonian cycles of $G(5,n)$ for $4\leq n\leq 38$}\label{thamcycleg5n}
\end{table}

\vspace{1em}

\begin{table}
$$\begin{array}{|ll|}
\hline
n=14: & (4, 10, 16, 6, 28, 18, 8, 14, 20, 26, 12, 22, 24, 2, 4)\\
n=16: & (4, 10, 24, 14, 20, 6, 16, 22, 12, 26, 32, 30, 28, 18, 8, 2, 4)\\
n=18: & (4, 10, 12, 14, 20, 6, 16, 22, 36, 26, 32, 30, 28, 34, 24, 2, 8, 18, 4)\\
n=20: & (4, 10, 16, 22, 12, 26, 36, 38, 24, 34, 40, 18, 28, 30, 32, 6, 20, 14, 8, 2, 4)\\
n=22: & (4, 10, 12, 14, 20, 42, 16, 22, 36, 26, 32, 30, 28, 6, 40, 34, 24, 38, 44, 18, 8, 2, 4)\\
n=24: & (4, 10, 12, 14, 20, 42, 16, 22, 36, 26, 32, 30, 28, 6, 40, 46, 48, 34, 24, 38, 44, 18, 8, 2, 4)\\
n=26: & (4, 10, 12, 14, 20, 42, 16, 22, 36, 26, 32, 30, 28, 18, 40, 34, 24, 38, 44, 50, 48, 46, 52, 6, 8, 2, 4)\\
n=28: & (4, 10, 12, 14, 20, 6, 16, 22, 36, 26, 32, 30, 28, 18, 40, 34, 24, 38, 44, 50, 48, 46, 52, 42, 56, 54,\ \\
&\null\hfill  8, 2, 4)\\
n=30: & (4, 10, 12, 14, 20, 30, 16, 22, 36, 26, 32, 54, 28, 18, 40, 34, 24, \ \\
&\null\hfill 38, 44, 50, 56, 42, 52, 46, 48, 58, 60, 2, 8, 6, 4)\\
n=32: & (4, 10, 12, 14, 20, 6, 16, 22, 36, 26, 32, 30, 28, 18, 40, 34, 24, \ \\
&\null\hfill 38, 44, 50, 48, 46, 52, 42, 56, 62, 60, 58, 64, 54, 8, 2, 4)\\
n=34: & (4, 10, 12, 14, 20, 6, 16, 22, 36, 26, 32, 30, 28, 18, 40, 34, 24, \ \\
&\null\hfill 38, 44, 50, 48, 46, 52, 42, 56, 62, 60, 58, 64, 54, 68, 66, 8, 2, 4)\\
n=36: & (4, 10, 12, 14, 20, 6, 16, 22, 36, 26, 32, 30, 28, 18, 40, 34, 24, 38, 44, 50, 72, 46, 52, 42, 56,\ \\
&\null\hfill  62, 60, 58, 48, 70, 64, 54, 68, 66, 8, 2, 4)\\
n=38: & (4, 10, 12, 14, 20, 6, 16, 22, 36, 26, 32, 30, 28, 18, 40, 34, 24, 38, 44, 50, 72, 46, 52, 42, 56, \\
&\null\hfill  62, 60, 58, 64, 54, 68, 74, 48, 70, 76, 66, 8, 2, 4)\\
n=40: & (4, 10, 12, 14, 20, 6, 16, 22, 36, 26, 32, 30, 28, 18, 40, 34, 24, 38, 44, 50, 72, 46, 52, 42, 56,  \\
&\null\hfill 62, 60, 58, 48, 70, 64, 54, 68, 74, 80, 78, 76, 66, 8, 2, 4)\\
n=42: & (4, 10, 12, 14, 20, 78, 16, 22, 36, 26, 32, 30, 28, 6, 40, 34, 24, 38, 44, 50, 72, 46, 52, 42, 56, \\
&\null\hfill   62, 60, 58, 48, 70, 64, 54, 68, 74, 80, 66, 76, 82, 84, 2, 8, 18, 4)\\
n=44: & (4, 10, 12, 14, 20, 42, 16, 22, 36, 26, 32, 54, 28, 18, 40, 34, 24, 38, 44, 50, 48, 46, 52, 30, 56,  \\
& \null\hfill 62, 60, 58, 64, 70, 76, 78, 88, 66,  68, 74, 80, 86, 72, 82, 84, 2, 8, 6, 4)\\
\hline
\end{array}$$
\caption{Hamiltonian cycles of $G(3,5,n)$ for $14\leq n\leq 44$.}\label{thc35n444}
\end{table}

\begin{table}
{\footnotesize $$\begin{array}{|ll|}
\hline
n=46: & (4, 10, 24, 14, 20, 6, 16, 22, 36, 26, 32, 30, 28, 18, 40, 34, 48, 38, 44, 50, 12, 46, 52, 42, 56, 62, 60, 58, 84, 70, 64,\\
&\null\hfill  54, 68, 74, 80, 78, 76, 82, 72, 86, 92, 90, 88, 66, 8, 2, 4)\\
n=48: & (4, 10, 12, 14, 20, 78, 16, 22, 36, 26, 32, 30, 28, 6, 40, 34, 24, 38, 44, 50, 72, 46, 52, 42, 56, 62, 60, 58, 48, 70, 64, \\
&\null\hfill  54, 68, 74, 80, 66, 76, 82, 96, 86, 92, 90, 88, 94, 84, 2, 8, 18, 4) \\
n=50: & (4, 10, 16, 22, 12, 26, 32, 30, 28, 18, 40, 34, 24, 38, 44, 50, 72, 46, 52, 42, 56, 62, 36, 58, 48, 70, 64, 54, 68, 74, 80, \\
&\null\hfill  66, 76, 82, 60, 86, 96, 98, 84, 94, 100, 78, 88, 90, 92, 6, 20, 14, 8, 2, 4) \\
n=52: & (4, 10, 12, 14, 20, 78, 16, 22, 36, 26, 32, 30, 28, 6, 40, 34, 24, 38, 44, 50, 72, 46, 52, 42, 56, 62, 60, 58, 48, 70, 64, \\
&\null\hfill 54, 68, 74, 80, 102, 76, 82, 96, 86, 92, 90, 88, 66, 100, 94, 84, 98, 104, 18, 8, 2, 4) \\
n=54: & (4, 10, 12, 14, 20, 78, 16, 22, 36, 26, 32, 30, 28, 6, 40, 34, 24, 38, 44, 50, 72, 46, 52, 42, 56, 62, 60, 58, 48, 70, 64, \\
&\null\hfill  54, 68, 74, 80, 102, 76, 82, 96, 86, 92, 90, 88, 66, 100, 106, 108, 94, 84, 98, 104, 18, 8, 2, 4) \\
n=56: & (4, 10, 12, 14, 20, 42, 16, 22, 36, 26, 32, 30, 28, 18, 40, 34, 24, 38, 44, 50, 72, 46, 52, 66, 56, 62, 60, 58, 48, 70, 64,  \\
& \null\hfill 54, 68, 74, 80, 102, 76, 82, 96, 86, 92, 90, 88, 78, 100, 94, 84, 98, 104, 110, 108, 106, 112, 6, 8, 2, 4)\ \\
n=58: & (4, 10, 12, 14, 20, 6, 16, 22, 36, 26, 32, 30, 28, 18, 40, 34, 24, 38, 44, 50, 72, 46, 52, 42, 56, 62, 60, 58, 48, 70, 64, \\
&\null\hfill 54, 68, 74, 80, 66, 76, 82, 96, 86, 92, 114, 88, 78, 100, 94, 84, 98, 104, 110, 108, 106, 112, 102, 116, 90, 8, 2, 4)\\
n=60: & (4, 10, 12, 14, 20, 42, 16, 22, 36, 26, 32, 66, 28, 18, 40, 34, 24, 38, 44, 50, 72, 46, 52, 30, 56, 62, 60, 58, 48, 70, 64, \\
& 54, 68, 74, 80, 102, 76, 82, 96, 86, 92, 114, 88, 78, 100, 94, 84, 98, 104, 110, 116, 90, 112, 106, 108, 118, 120, \\
&\null\hfill  2, 8, 6, 4)\\
n=62: & (4, 10, 12, 14, 20, 6, 16, 22, 36, 26, 32, 30, 28, 18, 40, 34, 24, 38, 44, 50, 72, 46, 52, 42, 56, 62, 60, 58, 48, 70, 64, \\
& 54, 68, 74, 80, 66, 76, 82, 96, 86, 92, 90, 88, 78, 100, 94, 84, 98, 104, 110, 108, 106, 112, 102, 116, 122, 120, 118, \\
&\null\hfill 124, 114, 8, 2, 4)\\
n=64: & (4, 10, 12, 14, 20, 6, 16, 22, 36, 26, 32, 30, 28, 18, 40, 34, 24, 38, 44, 50, 72, 46, 52, 42, 56, 62, 60, 58, 48, 70, 64, \\
&54, 68, 74, 80, 66, 76, 82, 96, 86, 92, 90, 88, 78, 100, 94, 84, 98, 104, 110, 108, 106, 112, 102, 116, 122, 120, 118, \\
& \null\hfill 124, 114, 128, 126, 8, 2, 4)\\
n=66: & (4, 10, 12, 14, 20, 6, 16, 22, 36, 26, 32, 30, 28, 18, 40, 34, 24, 38, 44, 50, 72, 46, 52, 42, 56, 62, 60, 58, 48, 70, 64, \\
& 54, 68, 74, 80, 66, 76, 82, 96, 86, 92, 90, 88, 78, 100, 94, 84, 98, 104, 110, 132, 106, 112, 102, 116, 122, 120, 118, \\
&\null\hfill 108, 130, 124, 114, 128, 126, 8, 2, 4) \\
n=68: & (4, 10, 12, 14, 20, 6, 16, 22, 36, 26, 32, 30, 28, 18, 40, 34, 24, 38, 44, 50, 72, 46, 52, 42, 56, 62, 60, 58, 48, 70, 64, \\
& 54, 68, 74, 80, 66, 76, 82, 96, 86, 92, 90, 88, 78, 100, 94, 84, 98, 104, 110, 132, 106, 112, 102, 116, 122, 120, 118, \\
&\null\hfill 124, 114, 128, 134, 108, 130, 136, 126, 8, 2, 4)\\
n=70: & (4, 10, 12, 14, 20, 6, 16, 22, 36, 26, 32, 30, 28, 18, 40, 34, 24, 38, 44, 50, 72, 46, 52, 42, 56, 62, 60, 58, 48, 70, 64, \\
& 54, 68, 74, 80, 66, 76, 82, 96, 86, 92, 90, 88, 78, 100, 94, 84, 98, 104, 110, 132, 106, 112,102, 116, 122, 120, 118,  \\
&\null\hfill 108, 130, 124, 114, 128, 134, 140, 138, 136, 126, 8, 2, 4)\\
n=72: & (4, 10, 12, 14, 20, 78, 16, 22, 36, 26, 32, 30, 28, 6, 40, 34, 24, 38, 44, 50, 72, 46, 52, 42, 56, 62, 60, 58, 48, 70, 64, \\
& 54, 68, 74, 80, 66, 76, 82, 96, 86, 92, 126, 88, 114, 100, 94, 84, 98, 104, 110, 132, 106, 112, 102, 116, 122, 120, \\
&\null\hfill 118, 108, 130, 124, 90, 128, 134, 140, 138, 136, 142, 144, 2, 8, 18, 4)\\
n=74: & (4, 10, 12, 14, 20, 42, 16, 22, 36, 26, 32, 66, 28, 18, 40, 34, 24, 38, 44, 50, 72, 46, 52, 30, 56, 62, 60, 58, 48, 70, 64, \\
& 54, 68, 74, 80, 102, 76, 82, 96, 86, 92, 114, 88, 78, 100, 94, 84, 98, 104, 110, 108, 106, 112, 90, 116, 122, 120,  \\
&\null\hfill  118, 124, 130, 136, 138, 148, 126, 128, 134, 140, 146, 132, 142, 144, 2, 8, 6, 4)\\
\hline
\end{array}$$}
\caption{Hamiltonian cycles of $G(3,5,n)$ for $46\leq n\leq 74$.}\label{thc35n4674}
\end{table}

\section{Appendix II}

\noindent
In this section we report some recent important development in respect of calculation of degrees of vertices in near Goldbach graphs.

\vspace{1em}
\noindent
We found exact formulas for degrees of any positive even integer $x$ in the near Goldbach graph $G(x/2)$ which can be approximated in the following functions:
$$\eta(x)=3\, \lfloor\frac{x}{12}\rfloor \prod\limits_{\Div{p}{x},\, p>2} \frac{p-1}{p-2}\ \prod\limits_{2<p< \sqrt{x}} \left( 1-\frac{2}{p}\right)\ \approx\ \kappa(x) \frac{x\, e^{-\beta}}{(\log\, x)^2}=\eta_a(x).$$
where $\beta=0.183407$, $\displaystyle{\kappa(x)=\prod\limits_{\Div{p}{x},\, p>2} \frac{p-1}{p-2}}$.

\vspace{1em}
\noindent
The following table shows the comparison between exact values and approximated values.

$$\begin{array}{|c|l|l|l|l|l|l|l|l|l|}
\hline
x & 10^2 & 10^3 & 10^4 & 10^5 & 10^6 & 10^7 & 10^8 & 10^9 & 10^{10} \\
\hline
\text{deg}(x) & 6 & 28 & 127 & 810 & 5402 & 38807 & 291400 & 2274205 & 18200488 \\
\hline
\eta(x) & 4 & 20 & 127 & 820 & 5770 & 42642 & 326294 & 2582599 & 20921398 \\
\hline
\eta_a(x) & 5 & 23 & 130 & 837 & 5815 & 42722 & 327097 & 2584477 & 20934266\\
\hline
\end{array}$$

\noindent
Also we note that the correlation coefficient of $x$ and degree\ $(x)$ is $0.999604$.

\end{document}